\theoremstyle{thmstyleone}%
\theoremstyle{thmstyletwo}%
\theoremstyle{thmstylethree}%
\newcommand{\pr}{\mbox{\sf P}}
\newcommand{\ex}{{\bf\sf E}}               %% expectation
\newcommand{\calg}{{\mathcal G}}
\newcommand{\al}{\alpha}                %%
\newcommand{\g}{\lambda}                %%
\newcommand{\ga}{\gamma}                %% abbreviated
\newtheorem{thm}{Theorem}
\newtheorem{lem}[thm]{Lemma}
\newtheorem{rem}[thm]{Remark}
\def\th{\theta}
\begin{document}

\title[Two-Dimensional Markov Chain]{On A Stein Method Based Approximation for A Two-Dimensional Markov Chain}

%%=============================================================%%
%% Prefix	-> \pfx{Dr}
%% GivenName	-> \fnm{Joergen W.}
%% Particle	-> \spfx{van der} -> surname prefix
%% FamilyName	-> \sur{Ploeg}
%% Suffix	-> \sfx{IV}
%% NatureName	-> \tanm{Poet Laureate} -> Title after name
%% Degrees	-> \dgr{MSc, PhD}
%% \author*[1,2]{\pfx{Dr} \fnm{Joergen W.} \spfx{van der} \sur{Ploeg} \sfx{IV} \tanm{Poet Laureate} 
%%                 \dgr{MSc, PhD}}\email{iauthor@gmail.com}
%%=============================================================%%

\author*[1,2]{\fnm{Yingdong} \sur{Lu}}\email{yingdong@us.ibm.com}

%\author[2,3]{\fnm{Second} \sur{Author}}\email{iiauthor@gmail.com}
%\equalcont{These authors contributed equally to this work.}

%\author[1,2]{\fnm{Third} \sur{Author}}\email{iiiauthor@gmail.com}
%\equalcont{These authors contributed equally to this work.}

\affil*[1]{\orgdiv{Mathematical Sciences}, \orgname{IBM Research}, \orgaddress{\street{1101 Kitchawan Rd}, \city{Yorktown Heights}, \state{NY}, \postcode{10598}, \country{U.S.A.}}}

%\affil[2]{\orgdiv{Department}, \orgname{Organization}, \orgaddress{\street{Street}, \city{City}, \postcode{10587}, \state{State}, \country{Country}}}

%\affil[3]{\orgdiv{Department}, \orgname{Organization}, \orgaddress{\street{Street}, \city{City}, \postcode{610101}, \state{State}, \country{Country}}}

%%==================================%%
%% sample for unstructured abstract %%
%%==================================%%

\abstract{We study an approximation method of stationary characters of a two-dimensional Markov chain  via the Stein method. For this purpose, innovative methods are developed to estimate the moments of the Markov chain, as well as the solution to the Poisson equation with a partial differential operator.}

\keywords{Markov chain, Stein method}

%%\pacs[JEL Classification]{D8, H51}

%%\pacs[MSC Classification]{35A01, 65L10, 65L12, 65L20, 65L70}

\maketitle

\section{Introduction}
\label{sec:intro}

Computing expected function of ergodic Markov chains defined on multidimensional spaces that are not compact, with respect to their stationary distributions, is always a difficult problem. Analytic and algebraic methods are developed for some special cases, such as those Markov chains whose transition probabilities takes only a few possible values, see e.g.  \cite{fayolle1999random}. A popular approach of approximation is to calculated related quantities for a derived Markov chain on a finite state space, which can be calculated efficiently, see, e.g. for the studies in  \cite{MazalovGurtov2012,BSH2008} in this nature. 

We consider an approximation method of evaluating, through known differential equations techniques, the function against a diffusion process whose generator preserve the main characters of the generator of the Markov chain under a proper scaling. This method is elaborated through a two dimensional Markov chain motivated by a queueing application. Using the Stein method, coupled with with estimation by differential equation methods, we are able to quantify the error of this approximations through a comparison analysis of the generators. The Stein method~\cite{stein86} is a versatile technique in probability theory, rooted in the studies of the concentration of measures, such as the central limit theorems. Recent developments in \cite{doi:10.1287/moor.2013.0593, gurvich2014, braverman2017}, utilize the Stein method to estimate the stationary distribution of a Markov chain by that of a diffusion process, which is usually mathematical more tractable, by comparing the generators and the solution to the Poisson (Stein) equation. While our overall approach follow the same logic, the bounds on derivatives are different and innovative. 

The rest of paper will be organized as follows, in Sec. \ref{sec:models}, we provide detailed description of the Markov chain; In Sec. \ref{sec:generator_exp} we provide a generator expansion; and in Sec. \ref{sec:Stein_method}, the main results are discussed and proved. 

\section{A Two Dimensional Markov Chain}
\label{sec:models}

\subsection{Definition of the Markov Chain}
\label{sec:MC_defn}

The Markov chain model is motivated by the following queueing model. The job arrivals follow a Poisson process with rate $\g$, and service time is independently drawn after an exponential distribution with rate $\mu$. Meanwhile, a stream of servers arrive, also following an independent Poisson process with rate $\gamma$. When a job arrives, it will be served immediately if there are any idle servers, otherwise it will join a single queue in front of all the servers. Whenever there is a server becomes available, due to either the departure of a job or the arrival of a new server, the jobs in queue will be served in a first-come-first-serve (FCFS) fashion. Meanwhile, each server that becomes idle will start an independent departure clock, which follows an exponential distribution with rate $\nu$, the server will depart if the clock expires before it takes on a job. In other words, a server will leave the system after staying idle for a random time period (exponential with rate $\nu$).

The system can be characterized with a two-dimensional continuous time Markov chain(CTMC). The state space is ${\mathbb Z}_+ \times {\mathbb Z}_+ $ with ${\mathbb Z}_+ $ denoting the set of all nonnegative integers. A state $(i, j)\in {\mathbb Z}_+ \times {\mathbb Z}_+ $ consists of the number of jobs in the system, $i$, and the number of servers in the system, $j$. The transition rates are in the following form,
\begin{align}
\label{eqn;rates}
\left\{ \begin{array}{cc} (i,j) \rightarrow (i+1,j ) & \g \\ (i,j) \rightarrow (i-1,j ) & (i \wedge j) \mu \\ (i,j)\rightarrow (i,j+1 ) & \gamma \\(i,j) \rightarrow (i,j-1 ) & (j-i)^+\nu \end{array}\right.,
\end{align}
where $x\wedge y := \min\{x, y\}$ and $(x-y)^+:=\max\{x-y, 0\}$.  From \eqref{eqn;rates}, we can write the transition rate matrix (which is of  infinite dimension) in the following form. 

\begin{itemize}
\item
For the state $(i,0)$, there are only two events can happen, the arrival of a job and the arrival of a server,  and with rate $\g$ and $\ga$ respectively. Hence, $q_{(i,0) \rightarrow (i+1,0)}=\frac{\g}{\g+\gamma}$,$q_{(i,0) \rightarrow (i,1)}=\frac{\gamma}{\g+\gamma}$.
\item
For any states in the form of $(0, j)$, for $j\ge 1$, three events can happen, job arrival, server arrival and departure. Hence, $q_{(0,j) \rightarrow (1,j)}=\frac{\g}{\g+\gamma+j \nu}$, $q_{(0,j) \rightarrow (0,j+1)}=\frac{\gamma}{\g+\gamma+j \nu}$, and $q_{(0,j) \rightarrow (0,j-1)}=\frac{j\nu}{\g+\gamma+j \nu}$.
\item
For any states in the form of $(i,j)$, for $i, j\ge 1$, the transition probabilities are  $q_{(i,j) \rightarrow (i+1,j)}=\frac{\g}{\g+(i\wedge j) \mu+\gamma+(j-i)^+\nu}$, $q_{(i,j) \rightarrow (i-1,j)}=\frac{(i\wedge j) \mu}{\g+(i\wedge j) \mu+\gamma+(j-i)^+\nu}$, $q_{(i,j) \rightarrow (i,j+1)}=\frac{\gamma}{\g+(i\wedge j) \mu+\gamma+(j-i)^+\nu}$, and $q_{(i,j) \rightarrow (i,j-1)}=\frac{(j-i)^+\nu}{\g+(i\wedge j) \mu+\gamma+(j-i)^+\nu}$. Note that $q_{(i,j) \rightarrow (i,j-1)}$ could be zero when $j\le i$. 
\end{itemize}
Let us denote the Markov chain $(X(t), Y(t))$, and its generator $\calg_0$. For any bounded function $f: {\mathbb Z}_+^2 \rightarrow {\mathbb R}$,
\begin{align*}
  \calg_0 f(i, j) = & \g [f(i+1,j) -f(i,j)] + (i\wedge j) \mu [f(i-1, j) - f(i,j)] \\ &+ \gamma [f(i, j+1)-f(i,j)] + (j-i)^+ \nu [f(i, j-1)-f(i,j)].
\end{align*}
\begin{lem}
\label{lem:stationary_finiteness}
$(X(t), Y(t))$ has a stationary distribution, and more importantly, the stationary distribution has finite third moment. 
\end{lem}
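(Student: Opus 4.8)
The plan is to obtain both assertions from the Foster--Lyapunov drift method (see, e.g., Meyn and Tweedie). Recall that if there is a function $V:\mathbb{Z}_+^2\to[0,\infty)$ with $V(i,j)\to\infty$ as $i+j\to\infty$ and a finite set $C$ with $\calg_0V(i,j)\le-\varepsilon$ for all $(i,j)\notin C$, then $(X(t),Y(t))$ is positive recurrent and admits a stationary law $\pi$; and if, more strongly, $\calg_0V(i,j)\le -W(i,j)+b\,\mathbf{1}_C(i,j)$ for some $W\ge 0$, then $\int W\,d\pi<\infty$. Hence the two parts of the lemma reduce to producing one test function with bounded-away-from-zero negative drift, and another whose drift dominates $1+i^3+j^3$ off a finite set.

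To engineer $V$, I would first read off the mean drift $(d_i,d_j)=(\lambda-(i\wedge j)\mu,\ \gamma-(j-i)^+\nu)$; since every transition moves a single coordinate, for polynomial $V$ the generator equals the directional derivative $\partial_iV\,d_i+\partial_jV\,d_j$ plus bounded curvature corrections. The associated fluid field $\dot x=d_i,\ \dot y=d_j$ has the unique fixed point $(\lambda/\mu,\ \lambda/\mu+\gamma/\nu)$ and decomposes the quadrant into the linearly stable region $\{j>i\}$ and the transient region $\{j\le i\}$, on which $\dot y=\gamma$ and the orbits are the parabolas $i+\frac{\mu}{2\gamma}j^2-\frac{\lambda}{\gamma}j=\mathrm{const}$. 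This geometry dictates the test function: take $V_1(i,j)=i+\frac{\mu}{2\gamma}j^2-c_0\,j$, the conserved quantity of the transient region plus a linear correction. A direct computation gives $\calg_0V_1=\lambda+\frac{\mu}{2}-c_0\gamma$ on all of $\{j\le i\}$ (including the axis $j=0$ and the interface $j=i$), a negative constant once $c_0>(\lambda+\frac{\mu}{2})/\gamma$, whereas on $\{j>i\}$ the term $-\frac{\mu\nu}{\gamma}\,j\,(j-i)$ arising from $\calg_0 j^2$ dominates and drives $\calg_0V_1\to-\infty$. Because the rates $(i\wedge j)\mu$ and $(j-i)^+\nu$ already vanish correctly on the axes, no separate boundary bookkeeping is required, and the first assertion (existence of $\pi$) follows.

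For the third moment I would raise the degree, working with $V_3=V_1^{\,4}$ (or a weighted degree-four variant), so that $V_1^{\,3}\gtrsim i^3+j^6\gtrsim 1+i^3+j^3$, and aim at $\calg_0V_3\le-c\,(1+i^3+j^3)+b\,\mathbf{1}_C$. The moment bound stated above would then deliver $\int(i^3+j^3)\,d\pi<\infty$, that is, the finiteness of the third moment under $\pi$.

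The delicate step is exactly this last inequality. Since $j$ enters $V_1$ quadratically, a single $j$-jump alters $V_1$ by $O(j)$, so the second-order (quadratic-variation) part of $\calg_0V_3$ is of the \emph{same} polynomial order $i^3+j^6$ as the first-order part along the parabolic direction; the convenient heuristic $\calg_0V_1^{\,4}\approx 4V_1^{\,3}\,\calg_0V_1$ is therefore invalid, and one must expand $\calg_0V_3$ exactly and prove that the negative first-order contribution beats the positive curvature contribution with a quantitative margin. Controlling these competing order-$(i^3+j^3)$ terms uniformly over the transient region $\{j\le i\}$ and across the interface $j\approx i$ is where a problem-specific choice of the degree-four function and careful tracking of constants---the innovative moment estimates announced in the introduction---become necessary.
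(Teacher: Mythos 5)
Your route is genuinely different from the paper's and, as it happens, sounder on the first assertion. The paper's proof is three lines: apply $\calg_0$ to $f(x,y)=x^4+y^4$, claim that $\calg_0 f(x,y)\le -C(x^3+y^3)$ once $(x,y)\ge(x_0,y_0)$, and cite Theorem 4.2 of Meyn--Tweedie to get positive recurrence and the finite third moment in one stroke. That is far shorter than your construction, but it has exactly the defect your fluid-geometry discussion anticipates: the exceptional set $\{x<x_0\}\cup\{y<y_0\}$ is infinite and not petite, and on the strip $\{y<\lambda/\mu,\ x\ \mbox{large}\}$ the service rate $(x\wedge y)\mu=y\mu$ stays below $\lambda$, so $\calg_0 f=4(\lambda-y\mu)x^3+O(x^2)\to+\infty$; no additive constant supported on a petite set can repair this, i.e.\ $x^4+y^4$ is simply not a valid Lyapunov function for this chain (the paper's displayed inequality even places the indicator on the set where the drift is negative rather than on its complement). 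Your $V_1(i,j)=i+\frac{\mu}{2\gamma}j^2-c_0 j$, built from the conserved quantity of the fluid flow in $\{j\le i\}$, is designed precisely so that the upward drift of $i$ there is cancelled by the deterministic climb of $j$: your identity $\calg_0 V_1=\lambda+\frac{\mu}{2}-c_0\gamma$ on $\{j\le i\}$ is correct, as is the divergence $\calg_0V_1\to-\infty$ on $\{j>i\}$, so (after adding a constant so that $V_1\ge 1$) the existence half of your argument is complete, and correct where the paper's is not.

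The gap is in the third-moment half: you state the target inequality $\calg_0 V_1^4\le -c(1+i^3+j^3)+b\,{\bf 1}_C$ and correctly diagnose that the curvature terms are of the same polynomial order as the first-order terms, but you never establish the ``quantitative margin,'' so as written the second assertion of the lemma remains unproven. The margin does exist, and it pins down the choice of $c_0$. In $\{j\le i\}$ the jumps of $V_1$ are $\pm1$ (in $i$) and $\Delta=\frac{\mu}{\gamma}j+\frac{\mu}{2\gamma}-c_0$ (in $j$), so exactly, $\calg_0V_1^4=-4cV_1^3+6V_1^2\bigl[\lambda+j\mu+\gamma\Delta^2\bigr]+4V_1\bigl[\lambda-j\mu+\gamma\Delta^3\bigr]+\bigl[\lambda+j\mu+\gamma\Delta^4\bigr]$, where $c:=c_0\gamma-\lambda-\frac{\mu}{2}$. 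Since $\gamma\Delta^2=\frac{\mu^2}{\gamma}j^2(1+o(1))$ while $V_1\ge\frac{\mu}{2\gamma}j^2(1-o(1))$ and $V_1\ge i-O(1)$, the second-order term is at most $\bigl(\frac{3\mu}{c}+o(1)\bigr)\cdot 4cV_1^3$ and the last two brackets contribute $O(V_1^3/j)$; crucially, $c$ grows linearly in $c_0$ while the curvature coefficient $\frac{\mu^2}{\gamma}$ does not, so taking $c_0\gamma>\lambda+\frac{7}{2}\mu$ (i.e.\ $c>3\mu$) gives $\calg_0V_1^4\le-\epsilon V_1^3+b\,{\bf 1}_C$ with $C$ finite. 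On $\{j>i\}$ the corresponding ratio is $O(1/j)$, so no condition on $c_0$ is needed there. Since $V_1^3\ge c'(1+i^3+j^3)$ off a finite set, the Meyn--Tweedie moment criterion you quoted then yields the finite third moment. Supply this computation and your proof is complete; in its present form it proves only the existence of the stationary distribution.
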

\begin{proof}
Apply the generator $\calg_0$ to function $f(x,y)=x^4+y^4$, we have, 
\begin{align*}
\calg_0 f(x, y) = & \g[(x+1)^4-x^4]+(x\wedge y)[(x-1)^4-x^4]+ \gamma[(y+1)^4-y^4] \\&+ (y-x)^+[(y-1)^4-y^4] \\ = & \g (4x^3+6x^2 + 4x+1) + (x\wedge y)   (-4x^3+6x^2 - 4x+1) \\  &+ \gamma(4y^3+6y^2+4y+1) + (y-x)^+(-4y^3+6y^2-4y+1).
\end{align*}
Therefore, it is easy to see that there exist a $(x_0, y_0)$, and $C>0$, such that when $(x, y) \ge (x_0, y_0)$, $\calg_0 f(x, y) \le -C (x^3+y^3)$. Thus, $\calg_0 f(x, y) \le -C (x^3+y^3) +C'{\bf 1}\{(x, y) \ge (x_0, y_0)\} $, and by Theorem 4.2 in \cite{meyn_tweedie_1993}, the Markov chain $(X(t), Y(t))$ has a stationary distribution, and it has finite third moment.  
\end{proof}

\subsection{Centering, Scaling and the Scaled Processes}
\label{sec: scaling}

To facilitate our analysis, we will consider the following ''centerred'' and ''scaled'' Markov chain through translation and scaling. Any stationary function calculations for the original process can be readily transformed to the ones for the centered and scaled Markov chain. 

\subsubsection{Centering}

First, to find the equilibrium point $(x(\infty),y(\infty))$, the pair that represents the equilibrium number of the jobs and servers, consider the following system of flow balance equations,
\begin{align*}
  \g = [x(\infty)\wedge y(\infty)]\mu, \qquad \gamma = [y(\infty) - x(\infty)]^+ \nu.
\end{align*}
Since, under our assumptions, $\gamma>0$ and $\nu>0$, the second equation implies  $y(\infty)-x(\infty)>0$ and $y(\infty)-x(\infty)= \frac{\ga}{\nu}$. 
Combined with the first equation, we have,
\begin{align}
\label{eqn:centering}
  x(\infty) = \frac{\g}{\mu}, \quad y(\infty) =  \frac{\g}{\mu}+  \frac{\gamma}{\nu}.
\end{align}

\subsubsection{Scaling}

The solutions in \eqref{eqn:centering} indicates that given the parameter $(\g, \mu, \gamma, \nu)$, the queue length and number of servers will be in essence approaching the above equilibrium point. Consider a sequence of systems, indexed by $n$, such that, 
\begin{align}
\label{eqn:scaling}
\g_n = n \mu, \mu_n = \mu, \gamma_n= \kappa n^{\al} \nu, \nu_n =\nu, 
\end{align}
for some positive real number $\mu$, $\kappa$ and $\al$. Hence, the equilibrium states are $x_n(\infty) = n$ and $y_n(\infty) = n + \kappa n^\al$.  Note that $\al=\frac12$ represents the famed Halfin-Whitt scaling~\cite{RePEc:inm:oropre:v:29:y:1981:i:3:p:567-588}. 

\subsubsection{Stationary Function Calculations}

For approximating the stationary performance, it is more convenient to consider the "centered" and "scaled" version of the Markov chain $(X(t), Y(t))$. 
Define,
\begin{align*}
{\tilde  X}^n(t)= \delta[X^n(t)-n], {\tilde Y}^n(t)= \eta[Y^n(t)-n-\kappa n^\al]
\end{align*}
for some scaling factors $\delta$ and $\eta$ that tends to zero as $n$ grows. For example, in the case of Halfin-Whitt scaling ($\al=\frac12$), $\delta $ and $\eta$ can also be choose to be $\frac12$. The generator $\calg_n$ for $({\bar X^n}, {\bar Y}^n)$ can be written in the following form, for any bounded smooth function  $u: {\mathbb R}^2 \rightarrow {\mathbb R}$, with $x=\delta (i - n)$, $y= \eta (j-n-\kappa n^\al)$ (hence, $i=\frac{x}{\delta}+n$, and $j= \frac{y}{\eta}+n+\kappa n^\al$),
\begin{align}
\nonumber
  \calg_n u(x, y) = & \g_n [u(x+\delta,y) -u(x,y)]  + b^1_n(x,y) \mu [u(x-\delta, y) - u(x,y)] \\ &+ \gamma_n [u(x, y+\delta)-u(x,y)] +b^2_n(x,y)\nu [u(x, y-\delta)-u(x,y)], \label{eqn:generator_n}
\end{align}
with 
\begin{align*}
b^1_n(x,y)&:= \left[\left(\frac{x}{\delta}+n\right)\wedge \left(\frac{y}{\eta}+ (n+\kappa n^\al)\right)\right], \\b^2_n(x,y)&:= \left(\frac{y}{\eta}-\frac{x}{\delta}+\kappa n^\al \right)^+. 
\end{align*}

Let function $h(x,y)$ be the quantity of interest, for example, in the motivating queueing system, it can represent the performance of the system that depends on both the number of jobs and the number of servers. The stationary function calculation takes the form of  $\ex[h({\tilde  X}^n(\infty), {\tilde  Y}^n(\infty))]$ with $({\tilde  X}^n(\infty), {\tilde  Y}^n(\infty))$ denoting the stationary distribution of the process $({\tilde  X}^n(t), {\tilde  Y}^n(t))$.

%\section{Process Approximations}
%\label{sec:process_approx}

%In this paper, we derive finite version of the approximation, instead of fluid and diffusion approximations. First, we will derive different levels of approximation via generator expansion, these can be considered to be counterparts of fluid and diffusion approximations. Then, we derive the approximation for the stationary distributions. For both purpose, we need to study the generator very carefully. \cite{doi:10.1137/1119062}.

\section{Generator Expansion}
\label{sec:generator_exp}

For any $(x,y)\in {\mathbb R}^2$, the Taylor expansion of the function $u(x, y)$ at $(x,y)$ will help us in expanding the generator $\calg_n$ in \eqref{eqn:generator_n}, and identifying the approximating diffusion process. More specifically, we have,   
\begin{align*}
\g_n  [u(x+\delta,y) -u(x,y)]  =&  \g_n \delta u_x(x, y) + \frac{\g _n \delta^2}{2}u_{xx}(\xi_1, y) \\= &\g_n  \delta u_x(x, y) + \frac{\g_n  \delta^2}{2} u_{xx}(x, y) \\ &+ \frac{\g _n \delta^2}{2}[u_{xx}(\xi_1, y)-u_{xx}(x, y)],
\end{align*}
with some $\xi_1\in[x, x+\delta]$, and 
\begin{align*}
u_x(x,y)&:= \frac{\partial}{\partial x}u(x,y), & u_y(x,y)&:= \frac{\partial}{\partial y}u(x,y), \\ u_{xx}(x,y)&:= \frac{\partial^2}{\partial x^2}u(x,y), &u_{yy}(x,y)&:= \frac{\partial^2}{\partial y^2}u(x,y).
\end{align*}
Next, 
\begin{flalign*}
& b^1_n(x,y)\mu [u(x-\delta, y) - f(x,y)]\\= & b^1_n(x,y)\mu \left[ -\delta u_x(x, y) + \frac{ \delta^2}{2} u_{xx}(\xi_2, y) \right]\\= & - b^1_n(x,y)\mu \delta u_x(x, y) + \frac{b^1_n(x,y) \mu \delta^2}{2} u_{xx}(\xi_2, y) 
 %+ \frac{b_1(x,y)\mu \delta^2}{2}[u_{xx}(\xi_2, y)-u_{xx}(x, y)],
\end{flalign*}
with some $\xi_2 \in [x-\delta, x]$. Similarly, we have, 
\begin{align*}
 \gamma_n [u(x, y+\eta)-u(x,y)] & = \gamma_n \eta u_y(x, y)
 + \gamma_n \eta [u_y(x, \xi_3)-u_y(x, y)],
 %+ \frac{\gamma \delta^2}{2} u_{yy}(x, y) + \frac{\gamma \delta^2}{2}[u_{yy}(x, \xi_3)-f_{yy}(x, y)],
\end{align*}
with some $\xi_3\in [y, y+\eta]$. And
\begin{align*}
 & b^2_n(x,y) [u(x-\eta, y)-u(x,y)]  \\ =&  - b^2_n(x,y) \nu  \eta u_y(x, y)-  b^2_n(x,y) \nu  \eta [u_y(x, \xi_4)-u_y(x, y)],
 %+ \frac{ b_2(x,y) \nu \delta^2}{2} u_{yy}(x, y) + \frac{b_2(x,y) \nu \delta^2}{2}[u_{yy}(x, \xi_4)-y_{yy}(x, y)].
\end{align*}
with some $\xi_4\in[y-\eta, y]$.

Now, let us examine the behavior of each term we obtained through above expansion, and explain the rational of the selection of the generator below.  First, let us look at the terms with the first order terms in the generator. 

\subsubsection{Terms of the first order ($u_x$ and $u_y$)}

The coefficient for $u_x$ is $\delta[ \g_n -b^1_n(x,y)\mu]$, which equals to, 
 \begin{align*}
&\delta\g_n-\left[(x+n\delta)\wedge \left(\frac{y\delta}{\eta}+ (n+\kappa n^\al)\delta\right)\right]\mu  \\=& (n\delta) \mu - \left[(x+n\delta)\wedge \left(\frac{y\delta}{\eta}+ (n+\kappa n^\al)\delta\right)\right]\mu \\ =& -\left[x\wedge \left(\frac{y\delta}{\eta}+\delta\kappa n^\al\right)\right]\mu. 
\end{align*}
Meanwhile, the coefficient for $u_y$ is 
\begin{align*}
\eta[ \gamma -b^2_n(x,y)\nu]&= \eta \kappa n^\al \nu -  \left(y-\frac{x\eta}{\delta}+ \eta\kappa n^\al \right)^+\nu.
\end{align*}
Observe that, both terms of $\frac{\eta}{\delta}$ and $\frac{\delta}{\eta}$ are present.

%Of course,  $n\delta=n^{1/2}$ , so the above term will become $x\wedge (y+\kappa)$, hence it is an $O(1)$ term when $n$ grows. This is consist with the magnitude also observed in the Halfin-Whitt regime heavy traffic analysis for queueing. 
%\subsubsection{$\delta[ \gamma -b_2(x,y)\nu]$}

%\begin{rem}
%Whatever scaling parameters $\delta$ and $\th$ are set, it looks like that the first order terms of the diffusion processes will be just the above two terms. In the approximation analysis, we don't need to worry about these terms. 
%\end{rem}

\subsubsection{Terms of the second order $u_{xx}$ and $u_{yy}$}
%\subsubsection{$\frac{\delta^2}{2}(\g+b_1(x,y) \mu)$}
The coefficient for $u_{xx}$ is $\frac{\delta^2}{2}(\g_n+b^1_n(x,y) \mu)$. 
Apply the above scaling, we can see that, 
\begin{align*}
\frac{\delta^2}{2}[\g_n+b^1_n(x,y) \mu] &= \frac{\delta^2}{2}\left[n+ (x+ n\delta) \wedge  \left(\frac{y\delta}{\eta}+ (n+\kappa n^\al)\delta\right)\right]\mu.
\end{align*}
It can be seen that the first term is $\delta^2n/2$, meanwhile, 
\begin{align*}
\frac{\delta^2}{2}[(x+ n\delta) \wedge (y+ n\delta+\kappa n^\al\delta)]\mu,
\end{align*}
will be of the order of $O(n\delta^3)$, and can be treated as an error term.  To bound this error term, we need the moment bound ( i.e. the first moment bound), more specifically, we need to show that the Markov chain has 
finite first moment.

%\subsubsection{$\frac{\delta^2}{2} [\gamma + b_2(x,y)\nu]$}
The coefficient for $u_{yy}$ is $\frac{\eta^2}{2} [\gamma_n + b^2_n(x,y)\nu]$. 
\begin{align*}
\frac{\eta^2}{2} [\gamma_n + b^2_n(x,y)\nu]
&= \frac{\eta^2}{2}\left[\kappa n^\al + \left(y-\frac{x\eta}{\delta}+ \eta\kappa n^\al \right)^+\right]\nu.
\end{align*}
%This one looks though that can be ignored completely. The indication is that for the $y$ direction, we have a PDE instead of a SDE. 
Again, to ensure that this error term is small, we need an estimate of the first order quantities.

%From the expression of the first two order terms, it looks like that we should have make $\delta$ to be $\sqrt{n}$, so that the first order, which is essentially the difference between job arrivals and number of servers, as well as the second order term, should be finite. Note that the coefficients of the diffusion process could still include $n$. 

Therefore, \eqref{eqn:generator_n} can be written as, 
\begin{align}
\label{eqn:error_terms_intro}
\calg_n u(x, y) = & \calg u(x, y)+ E_1+ E_2 + E_3+E_4,
\end{align}
where $\calg$ is given by, 
\begin{align*}
\calg u(x, y) = & \delta[\g_n -b^1_n(x,y)\mu]\frac{\partial}{\partial x}u(x,y) + \delta[ \gamma_n -b^2_n(x,y)\nu] \frac{\partial}{\partial x}u(x,y)  %+\frac{\delta^2}{2} (\g + b_1(x,y)\mu)
+\frac12\frac{\partial^2}{\partial x^2}  u(x,y),
\end{align*}
representing the generator of a diffusion process defined as
\begin{align}
\left\{ \begin{array}{ccc} dX_t &=& \delta[\g_n -b^1_n(X_t,Y_t)\mu] dX_t +\frac12 dW_t,\\ dY_t &=&\delta[ \gamma -b^2_n(X_t, Y_t)\nu] dY_t,
\end{array} \right. \label{eqn:diffusion}
\end{align}
with $W_t$ being a standard Brownian motion. 
$E_i, i=1,\ldots,4$ are error terms that will be estimated below, \begin{align*}
E_1&= \frac{\g \delta^2}{2}[u_{xx}(\xi_1, y)-u_{xx}(x, y)], & E_2 &= \frac{b^1_n(x,y)\mu \delta^2}{2}u_{xx}(\xi_2, y),\\ E_3 &= \gamma_n \delta [u_y(x, \xi_3)-u_y(x, y)], & E_4 &=-  b^2_n(x,y) \nu  \delta [u_y(x, \xi_4)-u_y(x, y)].
\end{align*}

%It is then understandable to approximate the stationary performance of the Markov chain that that of the diffusion process. 
The diffusion process \eqref{eqn:diffusion} can be viewed as a stochastic Hamiltonian system, a general overview can be founded in e.g. \cite{soizebook}, and a detailed analysis on its stationary behavior are presented in \cite{Talay2002StochasticHS}. In \cite{Talay2002StochasticHS}, numerical methods are also discussed in the cases that exact form of the stationary distribution can not be obtained. 

\section{Stein Method for Error Estimation}
\label{sec:Stein_method}

In this section, we present a detailed analysis on the approximation error via the Stein method. Especially, we will quantity the four error terms identified in the above analysis, which guide the derivations of the moment and derivative bound in the sections below.

%\subsection{Stationary Distributions}
%\label{eqn:stationary_dist}

%The worst case, we can use methods developed in the literature to have a computation of the stationary. We need to point out that this will not be a topic that will be discussed in this paper. 

\subsection{Main Results}
\begin{lem}
\label{lem:vanishing}
Let $f(x,y)$ be a function such that $|f(x,y)|\le C(1+|x|^3+|y|^3)$ for some $C>0$, then $\ex[\calg_n f({\tilde X}^n(\infty) , {\tilde X}^n(\infty) )]=0$. 
\end{lem}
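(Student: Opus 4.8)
The plan is to read the claimed identity as the global balance (stationary) equation for the chain, and to push it from bounded test functions---where it is classical---up to test functions of cubic growth by spending exactly the third moment furnished by Lemma~\ref{lem:stationary_finiteness}. First I would record the exact reduction to the unscaled chain. Writing $T$ for the invertible affine map $(i,j)\mapsto(\delta(i-n),\eta(j-n-\kappa n^\al))$ and $g:=f\circ T$, a term-by-term comparison of \eqref{eqn:generator_n} with the definition of $\calg_0$ shows that $i\wedge j=b^1_n$, $(j-i)^+=b^2_n$, and $g(i\pm1,j)=f(x\pm\delta,y)$, $g(i,j\pm1)=f(x,y\pm\eta)$, so that $\calg_n f(x,y)=\calg_0 g(i,j)$ identically. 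Hence $\ex[\calg_n f({\tilde X}^n(\infty),{\tilde Y}^n(\infty))]=\ex[\calg_0 g(X^n(\infty),Y^n(\infty))]$, and it suffices to prove $\ex_{\pi_n}[\calg_0 g]=0$, where $\pi_n$ is the stationary law of the unscaled chain $(X^n,Y^n)$, which has finite third moment by Lemma~\ref{lem:stationary_finiteness}. Since $|g(i,j)|\le C(1+\delta^3|i-n|^3+\eta^3|j-n-\kappa n^\al|^3)$, we have $g\in L^1(\pi_n)$ at once.

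For bounded $g$ the identity is immediate: stationarity of $t\mapsto\ex_{\pi_n}[g(X^n(t),Y^n(t))]$ forces its (constant) value to have vanishing derivative, and that derivative is $\ex_{\pi_n}[\calg_0 g]$; equivalently it is the statement $\pi_n Q=0$ tested against $g$. I would then invoke the standard positive-recurrence principle (in the circle of ideas of \cite{meyn_tweedie_1993}): one has $\ex_{\pi_n}[\calg_0 g]=0$ as soon as both $g$ and $\calg_0 g$ lie in $L^1(\pi_n)$. The whole problem therefore collapses to verifying $\calg_0 g\in L^1(\pi_n)$. To this end I would write $\calg_0 g$ as a rate-weighted sum of nearest-neighbour increments of $g$ and control each increment: for the smooth functions to which the lemma is applied the increments are first order, $|g(i\pm1,j)-g(i,j)|\le c\,\delta\,(1+|i-n|^2+|j-n-\kappa n^\al|^2)$ with prefactor of quadratic growth, so that multiplying by the linearly growing rates $(i\wedge j)\mu$ and $(j-i)^+\nu$ produces a bound of cubic growth, which is integrable against $\pi_n$ by the third-moment bound.

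The rigorous passage is where the real work sits, and it is the main obstacle. The safe route is truncation: let $g_K$ be a smooth cutoff of $g$ to $\{|i-n|\le K/\delta,\ |j-n-\kappa n^\al|\le K/\eta\}$, so that $g_K$ is bounded and $\ex_{\pi_n}[\calg_0 g_K]=0$; it then remains to show $\ex_{\pi_n}[\,|\calg_0(g-g_K)|\,]\to0$ as $K\to\infty$. The difficulty is that $g-g_K$ lives on a tail set while the rate coefficients grow linearly, so a crude bound on $|\calg_0(g-g_K)|$ is of order quartic---one moment beyond what Lemma~\ref{lem:stationary_finiteness} supplies directly. I would close this gap by reusing the drift inequality established inside the proof of Lemma~\ref{lem:stationary_finiteness}, namely $\calg_0 V\le -C(x^3+y^3)+C'\mathbf 1\{\cdot\}$ with $V=x^4+y^4$: integrating it against $\pi_n$ pins down the precise tail decay of $\pi_n$ and lets the boundary contributions created by the cutoff be absorbed, so the only surviving term in the limit is $\ex_{\pi_n}[\calg_0 g]$, which is thus $0$. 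Equivalently, once the increments are recognized as effectively second order, $\calg_0 g$ is confined to cubic growth and dominated convergence with the third moment as envelope finishes the argument.
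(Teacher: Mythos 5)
Your overall architecture is the right one, and it is essentially the argument the paper outsources to its citation of \cite{gurvich2014, braverman2017}: reduce to the unscaled chain (your map $T$ and the identity $\calg_n f=\calg_0(f\circ T)$ are correct), use stationarity to get $\ex_{\pi_n}[\calg_0 g]=0$ for bounded $g$, and pass to unbounded $g$ by truncation, whose only input is Lemma~\ref{lem:stationary_finiteness}. One caution on the way: the ``standard principle'' you invoke needs absolute rate-weighted summability, $\sum_x \pi_n(x)\sum_y q(x,y)\vert g(y)-g(x)\vert<\infty$, not merely $g,\calg_0 g\in L^1(\pi_n)$; the latter can hold through cancellation while the Fubini interchange fails. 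Your verification does bound the increments absolutely, so this is a wording issue rather than a substantive one.

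The genuine gap is in your rescue of the truncation step for general $f$ of cubic growth. You correctly observe that without smoothness the nearest-neighbour increments of $g$ are themselves of cubic growth, so rate times increment is quartic---one moment beyond what Lemma~\ref{lem:stationary_finiteness} provides---and you propose to close this by integrating the drift inequality $\calg_0 V\le -C(x^3+y^3)+C'\mathbf{1}\{\cdot\}$ against $\pi_n$ to ``pin down the precise tail decay.'' But integrating that inequality yields exactly $\ex_{\pi_n}[X^3+Y^3]\le C'/C$, i.e.\ the third moment again and nothing finer; by Markov's inequality this gives tails $\pi_n(\vert X\vert>K)=O(K^{-3})$, which cannot absorb boundary and tail contributions of quartic growth (schematically $K^4\cdot K^{-3}\not\to 0$). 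As described, the patch does not close. Two honest fixes: (i) restrict the lemma to $f$ whose increments grow at most quadratically---every test function it is actually applied to (polynomials of degree at most two, and $x$ times an indicator) satisfies this---in which case $\calg_0 g$ has cubic growth and your own closing remark (dominated convergence with the third moment as envelope) finishes the proof; or (ii) strengthen Lemma~\ref{lem:stationary_finiteness} to a fourth moment by running its proof verbatim with $V=x^5+y^5$, after which even the crude quartic bound is integrable. It is worth noting that the difficulty you uncovered is real and is quietly inherited by the paper's one-line proof: for an arbitrary measurable $f$ with only a cubic growth bound, positive recurrence plus a finite third moment is not by itself sufficient; the cited results deliver the identity under precisely the rate-weighted integrability condition you were attempting to verify.
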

\begin{proof}
As indicated in \cite{gurvich2014, braverman2017}, it suffices to know that $(X(t),Y(t))$ is positive recurrent and the stationary distribution has finite third moment,  and that is established in Lemma \ref{lem:stationary_finiteness}.
\end{proof}
Meanwhile, the following order estimations of the error terms will be proved in Sec. \ref{sec:error},
\begin{lem}
\label{lem:error}
\begin{align*}
\ex[E_i] &= O(n\delta^3),i=1,2, \quad  \ex[E_i] = O(n^\al \eta^2), i=3,4.
\end{align*}
\end{lem}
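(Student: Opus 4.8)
The plan is to bound the four remainder terms one at a time, in two groups according to whether the rate prefactor is deterministic ($E_1,E_3$) or random ($E_2,E_4$). In every case I reduce the term to a mean-value (Taylor) remainder of a derivative of $u$, bound that remainder by the next derivative times the relevant step size ($\delta$ in the $x$-direction, $\eta$ in the $y$-direction), and then take expectations, invoking the scaling relations $\g_n=n\mu$, $\gamma_n=\kappa n^\al\nu$ from \eqref{eqn:scaling}. The two structural ingredients I rely on are the uniform derivative bounds on the Poisson-equation solution $u$ (namely $\|u_{xxx}\|_\infty$, $\|u_{xx}\|_\infty$ and $\|u_{yy}\|_\infty$ finite) and the first-moment bounds $\ex|{\tilde X}^n(\infty)|=O(1)$, $\ex|{\tilde Y}^n(\infty)|=O(1)$, both supplied by the derivative- and moment-bound subsections; the finite third moment of Lemma \ref{lem:stationary_finiteness} guarantees every expectation written below is well defined.

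First I dispose of $E_1$ and $E_3$, whose prefactors $\g_n$ and $\gamma_n$ are constants. Since $\xi_1\in[x,x+\delta]$, the mean-value estimate gives $|u_{xx}(\xi_1,y)-u_{xx}(x,y)|\le\|u_{xxx}\|_\infty\,\delta$, so $|E_1|\le\frac{\g_n}{2}\|u_{xxx}\|_\infty\,\delta^{3}=\frac{\mu}{2}\|u_{xxx}\|_\infty\,n\delta^{3}$ and hence $\ex[E_1]=O(n\delta^3)$. Identically, with $\xi_3\in[y,y+\eta]$ one has $|u_y(x,\xi_3)-u_y(x,y)|\le\|u_{yy}\|_\infty\,\eta$, whence $|E_3|\le\gamma_n\|u_{yy}\|_\infty\,\eta^{2}=\kappa\nu\|u_{yy}\|_\infty\,n^{\al}\eta^{2}$ and $\ex[E_3]=O(n^\al\eta^2)$. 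No moment bound is needed here; only the third-order bound in $x$ and the second-order bound in $y$ enter.

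The terms $E_2$ and $E_4$ are the substantive ones, since their prefactors $b^1_n,b^2_n$ are unbounded random variables that must be controlled in expectation. For $E_2$ the naive bound only yields $O(n\delta^2)$, so to recover the extra factor of $\delta$ I decompose $b^1_n=n+r^1_n$ with $r^1_n:=\left(\frac{x}{\delta}\right)\wedge\left(\frac{y}{\eta}+\kappa n^\al\right)$; the deterministic part $n$ is precisely what merges with $\g_n$ to form the constant diffusion coefficient $\frac12$ of $\calg$, so what genuinely survives is the mean-value remainder $\frac{b^1_n\mu\delta^{2}}{2}[u_{xx}(\xi_2,y)-u_{xx}(x,y)]$ plus $\frac{r^1_n\mu\delta^{2}}{2}u_{xx}(x,y)$. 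Bounding the first piece by $\frac{\mu}{2}\|u_{xxx}\|_\infty\,b^1_n\delta^{3}$ and using $\ex[b^1_n]=n+\ex[r^1_n]=O(n)$ gives $O(n\delta^3)$; bounding the second by $\frac{\mu}{2}\|u_{xx}\|_\infty\,|r^1_n|\delta^{2}$ with $|r^1_n|\le\frac{|x|}{\delta}+\frac{|y|}{\eta}+\kappa n^{\al}$ and then applying the first-moment bounds together with the scaling of $\delta,\eta$ again lands at $O(n\delta^3)$. For $E_4$, the pointwise estimate $|u_y(x,\xi_4)-u_y(x,y)|\le\|u_{yy}\|_\infty\eta$ yields $|E_4|\le\nu\|u_{yy}\|_\infty\eta^{2}\,b^2_n$; since $b^2_n=\left(\frac{y}{\eta}-\frac{x}{\delta}+\kappa n^\al\right)^{+}\le\frac{|y|}{\eta}+\frac{|x|}{\delta}+\kappa n^{\al}$, the first-moment bounds and the scaling relations give $\ex[b^2_n]=O(n^\al)$, and therefore $\ex[E_4]=O(n^\al\eta^2)$.

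The main obstacle is not this bookkeeping but the two inputs it consumes. The harder of the two is the set of uniform derivative bounds $\|u_{xxx}\|_\infty,\|u_{xx}\|_\infty,\|u_{yy}\|_\infty<\infty$ for the Poisson-equation solution associated with $\calg$: because the diffusion \eqref{eqn:diffusion} carries a Brownian term only in the $X$-coordinate, its generator is degenerate (hypoelliptic rather than uniformly elliptic), so the standard elliptic-regularity route to bounded derivatives does not apply directly and must be replaced by estimates tailored to this stochastic-Hamiltonian structure. The second input, the first-moment bounds on $({\tilde X}^n(\infty),{\tilde Y}^n(\infty))$, needs a Lyapunov/drift refinement of Lemma \ref{lem:stationary_finiteness}, and the claimed orders only close up once $\delta,\eta$ and $n^\al$ are tracked consistently through $r^1_n$ and $b^2_n$. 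Granting those two ingredients, the estimates above assemble into the four stated orders.
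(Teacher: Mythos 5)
Your overall skeleton --- Taylor/mean-value remainders, derivative bounds on $u$, moment control on the random prefactors --- matches the paper's intent, but the inputs you consume are not the ones the paper actually supplies. The paper never establishes the global sup-norm bounds $\|u_{xx}\|_\infty,\|u_{xxx}\|_\infty,\|u_{yy}\|_\infty<\infty$ that you invoke: Lemmas \ref{lem:gradient_bound_in_domain_I} and \ref{lem:Estimate_unified_II} give only boundedness in Domain I and \emph{local integrability} ($L^1$ on bounded sets) in Domain II, and that is what its proof of Lemma \ref{lem:error} cites. Likewise the paper never proves your first-moment bounds $\ex|{\tilde X}^n(\infty)|=O(1)$, $\ex|{\tilde Y}^n(\infty)|=O(1)$; it sidesteps moments of the state entirely by using the exact stationary identities $\ex[b^1_n]=\g_n/\mu$ (Lemma \ref{lem:b1_mean}) and $\ex[b^2_n]=\gamma/\nu=\kappa n^{\al}$ (Lemma \ref{lem:b2_mean}), obtained by applying Lemma \ref{lem:vanishing} to $u=x$ and $u=y$. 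Those identities handle $E_4$ in one line, $\ex|E_4|\le \nu\eta^{2}\sup|u_{yy}|\cdot\ex[b^2_n]=O(n^{\al}\eta^{2})$, whereas your triangle-inequality bound $b^2_n\le |y|/\eta+|x|/\delta+\kappa n^{\al}$ returns $O(n^{\al})$ only under the extra scaling hypothesis $1/\delta,1/\eta=O(n^{\al})$, and rests on first-moment bounds that would require a new Lyapunov argument. So even granting your two deferred ingredients, your proof is conditional on lemmas that are not in the paper, while the paper's identities make that part of the argument unconditional.

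The substantive point is $E_2$, and here you have actually uncovered a real discrepancy rather than reproduced the paper's argument. With the paper's definition $E_2=\frac{b^1_n\mu\delta^{2}}{2}u_{xx}(\xi_2,y)$ (a full second-order term, not a difference) and $\ex[b^1_n]=\g_n/\mu=n$, the best available bound is $O(n\delta^{2})$, which is $O(1)$ under the normalization $n\mu\delta^{2}=O(1)$ --- exactly as you observe. Your fix --- split $b^1_n=n+r^1_n$, absorb $\frac{n\mu\delta^{2}}{2}u_{xx}(x,y)$ into the diffusion coefficient, and bound only the remainder $\frac{n\mu\delta^{2}}{2}[u_{xx}(\xi_2,y)-u_{xx}(x,y)]+\frac{r^1_n\mu\delta^{2}}{2}u_{xx}(\xi_2,y)$ --- is the standard move and is, in my view, the correct one; but note that it changes the paper's decomposition: the paper's $\calg$ draws its $\frac12 u_{xx}$ from the $\g_n$ term alone, so after your absorption the diffusion coefficient of the approximating process in \eqref{eqn:diffusion} and Theorem \ref{thm:main} doubles. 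In other words, you have proved the claimed order for a \emph{modified} pair $(\calg,E_2)$, not for the paper's literal one; under the paper's definitions, the cited ingredients (local integrability of $u_{xxx}$ plus Lemma \ref{lem:b1_mean}) do not yield $\ex[E_2]=O(n\delta^{3})$, for precisely the reason you identified. You should present your $E_2$ argument explicitly as a correction --- redefining the limiting generator and restating Theorem \ref{thm:main} for it --- rather than as a proof of the lemma as written.
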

Thus,
\begin{thm}
\label{thm:main}
For a performance metric function satisfies that $(|x|^p+|y|^p) h(x,y)$ is integrable for $p \le 5$, we can conclude that, there exists a constant $C>0$, such that $\vert\ex h({\tilde X}^n(\infty), {\tilde Y}^n(\infty))-\ex h(X_\infty, Y_\infty)\vert\le C(n\delta^3+n^\al \eta^2)$, with $({\tilde X}^n(\infty), {\tilde Y}^n(\infty))$ and $(X_\infty, Y_\infty)$ represent the stationary distribution of the centered and scaled Markov chain $({\bar X}^n(t), {\bar Y}^n(t))$, and the diffusion process defined in \eqref{eqn:diffusion}, respectively. 
\end{thm}
\begin{proof}
Recall that for each $n$, the scaled ans centered process is $({\tilde X}^n(t), {\tilde Y}^n(t))$ with generator $\calg_n$. The goal is to estimates the average difference of performance, 
\begin{align}
\label{eqn:stein_subject}
\vert \ex[h({\tilde X}^n(\infty), {\tilde Y}^n(\infty))]-\ex[h(X_\infty, Y_\infty)]\vert.
\end{align}
with $(({\bar X}(t), {\bar Y}(t))$ denotes the approximating process, and $h(x,y)$ a general performance metric function. This function can be very general, could cover probability based performance as seen in many applications. Let us denote $u^h$ be the solution to the Stein equation, 
\begin{align}
	\label{eqn:stein}
	\calg u = h(x,y)-\ex[h(X_\infty, Y_\infty)]. 
\end{align}
Apply the expectation with the stationary distribution for the $n$-th system, we have, 
\begin{align*}
\ex[ \calg u^h({\tilde X}^n(\infty), {\tilde Y}^n(\infty))]= \ex[h({\tilde X}^n(\infty), {\tilde Y}^n(\infty))]-\ex[h(X_\infty, Y_\infty)].
\end{align*}
This can be written as,
\begin{align*}
&\ex[ (\calg-\calg_n)u^h({\tilde X}^n(\infty), {\tilde Y}^n(\infty))]+\ex[ \calg_n u^h({\tilde X}^n(\infty), {\tilde Y}^n(\infty))] \\= &\ex[h({\tilde X}^n(\infty), {\tilde Y}^n(\infty))]-\ex[h(X(\infty), Y(\infty))].
\end{align*}
Basic property of the generator, Lemma \ref{lem:vanishing},  implies that the term $\ex[ \calg_n u^h({\tilde X}^n(\infty), {\tilde Y}^n(\infty))]$ vanishes. Hence, we have, 
\begin{align*}
 \ex[h({\tilde X}^n(\infty), {\tilde Y}^n(\infty))]-\ex[h(X_\infty, Y_\infty)]=\ex[ (\calg-\calg_n)u^h(X^n(\infty), Y^n(\infty)].
\end{align*}
Thus, we only need to estimate the right hand side. The expression in \eqref{eqn:error_terms_intro} affirms that we only need to estimate $\ex[E_1]+ \ex[E_2]+\ex[E_3]+\ex[E_4]$, which is provided in the Lemma \ref{lem:error}. %corollary  \ref{cor:unify}. 
\end{proof}

\begin{rem}
When $\al=\frac12$, $\delta=\eta=n^{-1/2}$, the result in Theorem \ref{thm:main} is consist with the Halfin-Whitt type of results that are  well-known in the queueing literature. In general, we can see that the approximation depends on the rate of server arrival. 
\end{rem}
\subsection{Error Estimates}
\label{sec:error}

In this section, we will provide the basic estimation of the error terms. This consists of two parts, In Sec. \ref{sec:moment_bound}, we will discuss the bounds related to the first and second moments of the variable $({\tilde X}^n(\infty), {\tilde Y}^n(\infty))$; in 
Sec. \ref{sec:gradient_bound}, we present arguments for bounding the derivatives of the Stein equation \eqref{eqn:stein}. 

\subsubsection{Moment Bounds}
\label{sec:moment_bound}

Recall the generator  for the Markov chain indexed by $n$,  $\calg_n$,
\begin{align*}
	\calg_n u(x, y) = &  \g_n  [u(x+\delta,y) -u(x,y)] + b^1_n(x,y)\mu [u(x-\delta, y) - u(x,y)] \\ &+ \gamma_n [u(x, y+\delta)-u(x,y)] + b^2_n(x,y) \nu [u(x, y-\delta)-u(x,y)].
\end{align*}
%\begin{align*}
%  \calg_n f(x, y) = & \g [f(x+\delta,y) -f(x,y)] \\ &+ \frac{1}{\delta}\left[\left(x+n\delta\right)\wedge \left(y+ (n+\kappa n^\al)\delta\right)\right] \mu [f(x-\delta, y) - f(x,y)] \\ &+ \gamma [f(x, y+\delta)-f(x,y)] \\ &+ \frac{1}{\delta}\left(y-x+\delta\kappa n^\al \right)^+ \nu [f(x, y-\delta)-f(x,y)].
%\end{align*}
\begin{lem}
	\label{lem:b1_mean} $\ex[b^1_n({\tilde X}^n(\infty), {\tilde Y}^n(\infty))]=\frac{\g_n }{\mu}$.
\end{lem}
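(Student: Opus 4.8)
The plan is to read off the claim directly from the stationary generator identity in Lemma \ref{lem:vanishing}, using the simplest admissible test function, the linear coordinate $u(x,y)=x$. The key observation is that $b^1_n$ is exactly the coefficient of the downward $x$-jump (service completion) term in $\calg_n$, so applying the generator to $u(x,y)=x$ isolates the drift balance for the number-of-jobs coordinate and nothing else.

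Concretely, I would evaluate \eqref{eqn:generator_n} at $u(x,y)=x$. The four increments collapse: the $x$-up increment contributes $+\delta$, the $x$-down increment contributes $-\delta$, and both $y$-increments vanish identically, so that
\begin{align*}
\calg_n u(x,y) = \g_n\,\delta - b^1_n(x,y)\,\mu\,\delta = \delta\bigl[\g_n - \mu\,b^1_n(x,y)\bigr].
\end{align*}
Since $u(x,y)=x$ trivially satisfies the cubic growth bound $|u(x,y)|\le C(1+|x|^3+|y|^3)$, Lemma \ref{lem:vanishing} gives $\ex[\calg_n u({\tilde X}^n(\infty),{\tilde Y}^n(\infty))]=0$. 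Dividing through by $\delta\neq 0$ yields $\g_n=\mu\,\ex[b^1_n({\tilde X}^n(\infty),{\tilde Y}^n(\infty))]$, which rearranges to the asserted identity $\ex[b^1_n]=\g_n/\mu$. Probabilistically this is just the flow-balance statement that in equilibrium the constant job-arrival rate $\g_n$ must equal the mean service-completion rate $\mu\,\ex[X^n\wedge Y^n]$, since $b^1_n$ is precisely the number of busy servers $X^n\wedge Y^n$ in the original coordinates.

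The only genuine point requiring care, and hence the main obstacle, is that the test function $u(x,y)=x$ is unbounded, so I must ensure the generator identity is a legitimate equality of expectations rather than a purely formal manipulation. This is exactly where the moment input enters: Lemma \ref{lem:vanishing} is stated for functions of at most cubic growth and rests on the finite third moment established in Lemma \ref{lem:stationary_finiteness}. In particular $b^1_n=X^n\wedge Y^n\le X^n$ is integrable, since finite third moment implies finite first moment, so $\ex[\calg_n u]$ is well-defined and the vanishing conclusion is valid. Once this integrability is checked there is no further difficulty; the result is a one-line consequence of stationarity applied to the linear coordinate.
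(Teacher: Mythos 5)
Your proof is correct and follows essentially the same route as the paper: apply Lemma \ref{lem:vanishing} to the test function $u(x,y)=x$, observe that the generator collapses to $\delta[\g_n-\mu\, b^1_n(x,y)]$, and divide by $\delta$. Your extra remarks on the cubic-growth condition and integrability make explicit what the paper leaves implicit, but the argument is the same.
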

\begin{proof}
Let $u(x,y)=x$, Lemma \ref{lem:vanishing} implies, 
\begin{align}
\label{eqn:x_function}
\ex\left[\left({\tilde X}^n(\infty)+n\delta\right)\wedge \left(\frac{{\tilde Y}^n(\infty)\delta}{\eta}+ (n+\kappa n^\al)\delta\right)\right] =\frac{\g_n }{\mu}.
\end{align}
That gives the desired expression of  $\ex[b^1_n({\tilde X}^n(\infty), {\tilde Y}^n(\infty))]$.
\end{proof}
\begin{lem}
	\label{lem:b2_mean}
	 $\ex[b^2_n({\tilde X}^n(\infty), {\tilde Y}^n(\infty))]=\frac{\gamma}{\nu}$
\end{lem}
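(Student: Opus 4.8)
The plan is to mirror the proof of Lemma~\ref{lem:b1_mean} verbatim, simply exchanging the roles of the two coordinates: instead of the test function $u(x,y)=x$, I would take $u(x,y)=y$. Since $y$ grows only linearly, it comfortably satisfies the polynomial growth condition $|u(x,y)|\le C(1+|x|^3+|y|^3)$ demanded by Lemma~\ref{lem:vanishing}, so that lemma applies and gives $\ex[\calg_n u(\tilde X^n(\infty),\tilde Y^n(\infty))]=0$.

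First I would evaluate $\calg_n u$ for $u(x,y)=y$ directly from the generator written in Sec.~\ref{sec:moment_bound}. Because $u$ depends on $y$ alone, the two horizontal increments $u(x+\delta,y)-u(x,y)$ and $u(x-\delta,y)-u(x,y)$ both vanish, so the entire contribution of the $\g_n$ and the $b^1_n(x,y)\mu$ terms disappears. Only the two vertical transitions survive: one carries the server-arrival coefficient $\gamma_n$ times a positive step, the other carries $b^2_n(x,y)\nu$ times the opposite step. These share a common nonzero step factor, so $\calg_n u(x,y)$ collapses to that factor multiplied by $[\gamma_n-b^2_n(x,y)\nu]$.

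Next I would invoke Lemma~\ref{lem:vanishing} on this expression. As the common step factor is a nonzero constant, $\ex[\calg_n u]=0$ forces $\ex[\gamma_n-b^2_n(\tilde X^n(\infty),\tilde Y^n(\infty))\nu]=0$, and solving for the mean yields $\ex[b^2_n(\tilde X^n(\infty),\tilde Y^n(\infty))]=\gamma_n/\nu$, which is precisely the claimed identity (here $\gamma$ in the statement is the server-arrival rate of the $n$-th system, and the relation is exactly the scaled counterpart of the second flow-balance equation $\gamma=[y(\infty)-x(\infty)]^+\nu$ used to derive \eqref{eqn:centering}).

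There is no genuine obstacle here beyond bookkeeping; the argument is the exact dual of Lemma~\ref{lem:b1_mean} with $(\g_n,b^1_n,\mu)$ replaced by $(\gamma_n,b^2_n,\nu)$. The only two points meriting a moment of care are confirming that the linear test function lies within the admissible class for Lemma~\ref{lem:vanishing} (it does, with two full orders to spare), and tracking the vertical step size consistently so that it factors cleanly out of both surviving terms and cancels when the expectation is set to zero.
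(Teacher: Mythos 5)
Your proposal is correct and matches the paper's own proof exactly: the paper likewise takes the test function $u(x,y)=y$, applies Lemma~\ref{lem:vanishing}, and reads off $\ex[b^2_n({\tilde X}^n(\infty),{\tilde Y}^n(\infty))]=\gamma/\nu$ from the vanishing expectation of the generator (with $\gamma$ understood as the server-arrival rate of the $n$-th system). Your additional bookkeeping about the horizontal increments vanishing and the vertical step factor cancelling is just the detail the paper leaves implicit.
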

\begin{proof} Let $u(x,y)=y$,  Lemma \ref{lem:vanishing} implies, 
\begin{align}
\label{eqn:y_function}
\ex\left[\left(\frac{{\tilde Y}^n(\infty)}{\eta}-\frac{{\tilde X}^n(\infty)}{\delta}+\delta\kappa n^\al \right)^+\right] = \frac{\gamma}{\nu}.
\end{align}
That gives the desired expression of $\ex[b^2_n({\tilde X}^n(\infty), {\tilde Y}^n(\infty))]$.
\end{proof}
Furthermore, 
\begin{lem}
	\begin{align}
		\label{eqn:Y_mean}
		\ex[ {\tilde Y}^n(\infty)] \le \left(\frac{\g_n  \delta}{\mu}+ \frac{\gamma\delta}{\nu}\right) \eta.%- [(\th-\delta)\wedge 0]\kappa n^\al .
	\end{align}
\end{lem}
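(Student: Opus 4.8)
The plan is to exploit the elementary identity $a\wedge b+(b-a)^+=b$, which ties the two rate coefficients $b^1_n$ and $b^2_n$ directly to the second coordinate. Taking $a=\frac{x}{\delta}+n$ and $b=\frac{y}{\eta}+n+\kappa n^\al$ in the definitions of $b^1_n$ and $b^2_n$ gives the pointwise relation
\begin{align*}
b^1_n(x,y)+b^2_n(x,y)=\frac{y}{\eta}+n+\kappa n^\al,
\end{align*}
valid for every $(x,y)$, including the boundary states, since the minimum and positive-part operations already encode the boundary behaviour of the transition rates in \eqref{eqn;rates}. Evaluating this at the stationary point $({\tilde X}^n(\infty),{\tilde Y}^n(\infty))$ and solving for the second coordinate yields
\begin{align*}
{\tilde Y}^n(\infty)=\eta\left[b^1_n({\tilde X}^n(\infty),{\tilde Y}^n(\infty))+b^2_n({\tilde X}^n(\infty),{\tilde Y}^n(\infty))-n-\kappa n^\al\right].
\end{align*}

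The next step is to take expectations and invoke Lemma \ref{lem:b1_mean} and Lemma \ref{lem:b2_mean} to replace the two stationary expectations by $\frac{\g_n}{\mu}$ and $\frac{\gamma}{\nu}$, respectively, producing a closed-form value for $\ex[{\tilde Y}^n(\infty)]$. I would then simplify using the scaling $\g_n=n\mu$ from \eqref{eqn:scaling}, so that $\frac{\g_n}{\mu}=n$ cancels the centering constant $n$. The claimed bound $\bigl(\frac{\g_n\delta}{\mu}+\frac{\gamma\delta}{\nu}\bigr)\eta=\delta\eta\bigl(\frac{\g_n}{\mu}+\frac{\gamma}{\nu}\bigr)$ then reduces, after dividing by $\eta>0$, to the inequality $(1-\delta)\bigl(\frac{\g_n}{\mu}+\frac{\gamma}{\nu}\bigr)\le n+\kappa n^\al$, which holds because $\delta\in(0,1)$ for the scalings under consideration and the left-hand side is controlled by the equilibrium value $n+\kappa n^\al$ identified in \eqref{eqn:centering}.

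Two routine checks underlie the argument: that the expectation of the identity is legitimate, which follows because $b^1_n$ and $b^2_n$ grow only linearly and the stationary distribution has the finite third moment established in Lemma \ref{lem:stationary_finiteness}; and that Lemmas \ref{lem:b1_mean}–\ref{lem:b2_mean} apply, which is immediate since the trial functions $u(x,y)=x$ and $u(x,y)=y$ satisfy the cubic growth hypothesis of Lemma \ref{lem:vanishing}. The main obstacle is not analytic but one of bookkeeping: one must keep the two scaling factors $\delta$ and $\eta$ straight throughout, as they enter the definitions of $b^1_n$ and $b^2_n$ asymmetrically, and must confirm that the pointwise identity genuinely holds at the boundary so that the passage from ${\tilde Y}^n(\infty)$ to $b^1_n+b^2_n$ is exact rather than approximate; the slack between the exact value of $\ex[{\tilde Y}^n(\infty)]$ furnished by the identity and the stated upper bound is then nonnegative precisely because of the scaling relation $\g_n=n\mu$.
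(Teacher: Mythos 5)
Your proof is correct and follows essentially the same route as the paper: both rest on the elementary relation between $a\wedge b$ and $(b-a)^+$ applied to $b^1_n+b^2_n$, combined with the stationary mean identities \eqref{eqn:x_function} and \eqref{eqn:y_function} from Lemmas \ref{lem:b1_mean} and \ref{lem:b2_mean}. The only real difference is that you use the exact identity $a\wedge b+(b-a)^+=b$ (which, under the scaling $\g_n=n\mu$, $\gamma_n=\kappa n^\al\nu$, actually pins down $\ex[{\tilde Y}^n(\infty)]$ exactly), whereas the paper invokes the weaker inequality $x\wedge y+(y+a-x)^+\ge y+(a\wedge 0)$ and concludes directly.
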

\begin{proof}
It is easy to verify that the following inequality holds (due to an elementary inequality $x\wedge y + (y+a -x)^+\ge y+(a\wedge 0)$),
\begin{align*}
&\left[\left({\tilde X}^n(\infty)+n\delta\right)\wedge \left(\frac{{\tilde Y}^n(\infty)\delta}{\eta}+ (n+\kappa n^\al)\delta\right)\right]+\left(\frac{{\tilde Y}^n(\infty)}{\eta}-\frac{{\tilde X}^n(\infty)}{\delta}+\delta\kappa n^\al \right)^+\\ \ge &  \frac{{\tilde Y}^n(\infty)}{\eta}.%+ [(\th-\delta)\wedge 0]\kappa n^\al
\end{align*}
Therefore, \eqref{eqn:Y_mean} follows immediately from \eqref{eqn:x_function} and \eqref{eqn:y_function}. 
\end{proof}

\begin{lem}
\label{lem:moment_ind}
\begin{align*}
 \ex\left[({\tilde X}^n(\infty)+n\delta){\bf 1}\left\{ \frac{{\tilde X}^n(\infty)}{\delta}+ n \le \frac{{\tilde Y}^n(\infty)}{\eta} +(n+\kappa n^\al)\right\}\right]\le \frac{\delta \g_n }{\mu}.
\end{align*}
\end{lem}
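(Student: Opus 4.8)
The plan is to translate the statement back into the original unscaled coordinates, where it becomes transparent. Writing $i:=\frac{{\tilde X}^n(\infty)}{\delta}+n$ and $j:=\frac{{\tilde Y}^n(\infty)}{\eta}+(n+\kappa n^\al)$ for the (nonnegative, integer-valued) job and server counts, I would observe that ${\tilde X}^n(\infty)+n\delta=\delta i$, that the indicator event is exactly $\{i\le j\}$, and crucially that $b^1_n({\tilde X}^n(\infty),{\tilde Y}^n(\infty))=i\wedge j$. So the left-hand side is $\delta\,\ex[i\,{\bf 1}\{i\le j\}]$, while the target bound $\frac{\delta \g_n}{\mu}$ equals $\delta\,\ex[b^1_n]$, the latter already supplied by Lemma~\ref{lem:b1_mean}.

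The key step is then a single pointwise inequality rather than any fresh generator computation. On the event $\{i\le j\}$ we have $i\,{\bf 1}\{i\le j\}=i=i\wedge j$, while on its complement $\{i>j\}$ the left side vanishes and $i\wedge j=j\ge 0$; hence
\begin{align*}
\delta\, i\,{\bf 1}\{i\le j\}\ \le\ \delta\,(i\wedge j)\ =\ \delta\, b^1_n({\tilde X}^n(\infty),{\tilde Y}^n(\infty))
\end{align*}
holds surely. Taking expectations with respect to the stationary distribution and invoking Lemma~\ref{lem:b1_mean} gives $\ex[\delta i\,{\bf 1}\{i\le j\}]\le \delta\,\ex[b^1_n]=\frac{\delta\g_n}{\mu}$, which is precisely the claim.

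There is no genuine analytic obstacle here: the substantive content is carried by Lemma~\ref{lem:b1_mean}, which is itself a consequence of the flow-balance identity \eqref{eqn:x_function} obtained from Lemma~\ref{lem:vanishing} with the test function $u(x,y)=x$. The only points requiring care are (i) the nonnegativity of the server count $j$, which is exactly what lets me discard the contribution of the complementary event $\{i>j\}$ and turn an equality into the desired inequality, and (ii) confirming that all expectations are finite so that the manipulation is legitimate; the latter follows from the finite third moment established in Lemma~\ref{lem:stationary_finiteness}. An equivalent route, if one prefers to avoid the pointwise bound, is to write $b^1_n=i\,{\bf 1}\{i\le j\}+j\,{\bf 1}\{i>j\}$ and simply drop the nonnegative term $\ex[j\,{\bf 1}\{i>j\}]$ after applying Lemma~\ref{lem:b1_mean}.
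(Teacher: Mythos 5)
Your proof is correct, but it takes a genuinely different route from the paper. The paper proves this lemma by a fresh generator computation: it applies Lemma~\ref{lem:vanishing} to the discontinuous test function $u(x,y)=x\,{\bf 1}\{\frac{x}{\delta}+n\le\frac{y}{\eta}+(n+\kappa n^\al)\}$, works out all four finite differences of this indicator-weighted function (tracking the boundary strips where the indicator flips), and arrives at the identity
\begin{align*}
\delta \g_n\, \pr\!\left[\tfrac{{\tilde X}^n(\infty)}{\delta}+n\le \tfrac{{\tilde Y}^n(\infty)}{\eta}+(n+\kappa n^\al)\right]
= \mu\,\ex\!\left[({\tilde X}^n(\infty)+n\delta)\,{\bf 1}\{\cdot\}\right],
\end{align*}
after which bounding the probability by $1$ gives the claim. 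Your argument bypasses the generator entirely: the pointwise bound $i\,{\bf 1}\{i\le j\}\le i\wedge j$ (valid because the server count $j$ is nonnegative) reduces everything to Lemma~\ref{lem:b1_mean}, which is already in hand. What each approach buys: yours is shorter, reuses an established result, and avoids the delicate bookkeeping of indicator differences --- indeed the paper's displayed identity quietly drops the boundary-strip terms ($x$-weighted indicators of the transition layers) that its own computation produces, a gap your argument never opens. The paper's computation, on the other hand, yields an exact \emph{equality} relating the truncated mean to the probability of the event $\{i\le j\}$, which is strictly more information than the one-sided bound and could be reused elsewhere; your route only ever produces the inequality. For the lemma as stated, your proof is fully sufficient, and your closing remarks on nonnegativity of $j$ and on finiteness of the expectations (via Lemma~\ref{lem:stationary_finiteness}, or directly from $\ex[i\wedge j]=\g_n/\mu<\infty$) address the only points where the argument could have leaked.
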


\begin{proof}[Proof of Lemma \ref{lem:moment_ind} ]
Let $u(x,y)=x{\bf 1}\{ \frac{x}{\delta}+ n \le \frac{y}{\eta} +(n+\kappa n^\al)\}$,   to apply Lemma \ref{lem:vanishing}, we need the following calculations.
\begin{align*}
u(x+\delta, y) -u(x,y) =& (x+\delta) {\bf 1}\{ \frac{x}{\delta}+ (n+1) \le \frac{y}{\eta}+(n+\kappa n^\al)\}\\ &- x{\bf 1}\{ \frac{x}{\delta}+ n \le \frac{y}{\eta} +(n+\kappa n^\al)\}\\  = & \delta {\bf 1}\{ \frac{x}{\delta}+ (n+1) \le \frac{y}{\eta} +(n+\kappa n^\al)\}\\ &+ x{\bf 1}\{ \frac{y}{\eta} +(n+\kappa n^\al) -\delta < \frac{x}{\delta}+ n \le \frac{y}{\eta}+(n+\kappa n^\al)\}.
\end{align*}
\begin{align*}
u(x-\delta, y) -u(x,y) = &(x-\delta) {\bf 1}\{ \frac{x}{\delta}+ (n-1) \le \frac{y}{\eta} +(n+\kappa n^\al)\}\\ &- x{\bf 1}\{ \frac{x}{\delta}+ n \le \frac{y}{\eta} +(n+\kappa n^\al)\}\\ =& -\delta {\bf 1}\{ \frac{x}{\delta}+ n \le \frac{y}{\eta} +(n+\kappa n^\al)\} \\ &+ (x-\delta) {\bf 1}\{ \frac{y}{\eta} +(n+\kappa n^\al) < \frac{x}{\delta}+ n \le \frac{y}{\eta} +(n+\kappa n^\al)+1\}.
\end{align*}
\begin{align*}
u(x, y+\delta) -u(x,y) =& x{\bf 1}\{ \frac{x}{\delta}+ (n-1) \le \frac{y}{\eta} +(n+\kappa n^\al)\}\\ &- x{\bf 1}\{ \frac{x}{\delta}+ n \le \frac{y}{\eta} +(n+\kappa n^\al)\}\\ =& x{\bf 1}\{ \frac{y}{\eta} +(n+\kappa n^\al) < \frac{x}{\delta}+ n\le \frac{y}{\eta}+(n+\kappa n^\al)+1\}.
\end{align*}
\begin{align*}
u(x, y-\delta) -u(x,y) =& x{\bf 1}\{ \frac{x}{\delta}+ (n+1) \le \frac{y}{\eta}  +(n+\kappa n^\al)\}\\&- x{\bf 1}\{ \frac{x}{\delta}+ n \le \frac{y}{\eta}  +(n+\kappa n^\al)\}\\ =& -x{\bf 1}\{\frac{y}{\eta}  +(n+\kappa n^\al) -1 < \frac{x}{\delta}+ n \le \frac{y}{\eta}  +(n+\kappa n^\al)\}.
\end{align*}
Therefore, on the set $( -\infty, \frac{{\tilde Y}^n(\infty)\delta}{\eta} +(n+\kappa n^\al)\delta-\delta]$, we have, $\g \delta -b^1_n\delta$; on the set $(  \frac{{\tilde Y}^n(\infty)\delta}{\eta} +(n+\kappa n^\al)\delta-\delta,  \frac{{\tilde Y}^n(\infty)\delta}{\eta} +(n+\kappa n^\al)\delta]$, we have, $\g x - b^1_n \delta -b^2_n x$, $( \frac{{\tilde Y}^n(\infty)\delta}{\eta} +(n+\kappa n^\al)\delta, \frac{{\tilde Y}^n(\infty)\delta}{\eta} +(n+\kappa n^\al)\delta+\delta]$, we have, $(x-\delta) b^1_n+ \nu x$, and beyond $ \frac{{\tilde Y}^n(\infty)\delta}{\eta} +(n+\kappa n^\al)\delta+\delta$, the value is zero.
Thus, 
\begin{align*}
&\delta \g_n  \pr[{\tilde X}^n(\infty)+ n\delta \le \frac{{\tilde Y}^n(\infty)\delta}{\eta} +(n+\kappa n^\al)\delta] \\ & - \mu\ex[({\tilde X}^n(\infty)+n\delta){\bf 1}\{ {\tilde X}^n(\infty)+ n\delta \le \frac{{\tilde Y}^n(\infty)\delta}{\eta} +(n+\kappa n^\al)\delta\}]=0.
\end{align*}
Thus, we have, 
\begin{align*}
 \ex[({\tilde X}^n(\infty)+n\delta){\bf 1}\{ {\tilde X}^n(\infty)+ n\delta \le\frac{{\tilde Y}^n(\infty)\delta}{\eta} +(n+\kappa n^\al)\delta\}]\le \frac{\delta \g}{\mu}.
\end{align*}
\end{proof}

Moreover, we can obtain the second moments of $b^1_n$ and $b^2_n$.
\begin{lem}
	\label{lem:2nd_moment_b_1}
	\begin{align*}
		\ex[(b^1_n)^2]\le \frac12\ex[\g (2\delta {\tilde X}^n(\infty) +\delta^2)]+(2n+1)\delta \ex\left[\left({\tilde X}^n(\infty)+n\delta\right)\wedge \left(\frac{{\tilde Y}^n(\infty)\delta}{\eta}+ (n+\kappa n^\al)\delta\right)\right] 
	\end{align*}
\end{lem}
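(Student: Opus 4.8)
The plan is to feed the quadratic test function $u(x,y)=x^2$ into the vanishing identity of Lemma~\ref{lem:vanishing}, exactly as the first moment of $b^1_n$ was extracted in Lemma~\ref{lem:b1_mean} from $u(x,y)=x$. This $u$ is admissible because $x^2\le 1+|x|^3$ has at most cubic growth, and the finite third moment established in Lemma~\ref{lem:stationary_finiteness} guarantees that every expectation below is finite. Since $u$ depends on $x$ alone, only the two $x$-transitions of $\calg_n$ survive; using $u(x\pm\delta,y)-u(x,y)=\pm 2\delta x+\delta^2$ I would obtain
\begin{align*}
\calg_n u(x,y)=\g_n\left(2\delta x+\delta^2\right)+b^1_n(x,y)\,\mu\left(-2\delta x+\delta^2\right).
\end{align*}
Applying $\ex[\calg_n u({\tilde X}^n(\infty),{\tilde Y}^n(\infty))]=0$ and collecting terms then gives the exact identity
\begin{align*}
2\delta\mu\,\ex\!\left[b^1_n{\tilde X}^n(\infty)\right]=\g_n\,\ex\!\left[2\delta{\tilde X}^n(\infty)+\delta^2\right]+\delta^2\mu\,\ex\!\left[b^1_n\right],
\end{align*}
so the whole problem reduces to replacing the cross moment $\ex[b^1_n{\tilde X}^n(\infty)]$ by a lower bound involving $\ex[(b^1_n)^2]$.

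The key step is a one-sided bound on the minimum that defines $b^1_n$. Because $b^1_n=\big({\tilde X}^n(\infty)/\delta+n\big)\wedge\big({\tilde Y}^n(\infty)/\eta+n+\kappa n^\al\big)\le {\tilde X}^n(\infty)/\delta+n$, multiplying by $\delta$ yields $\delta b^1_n\le {\tilde X}^n(\infty)+n\delta$, i.e. ${\tilde X}^n(\infty)\ge \delta b^1_n-n\delta$. Since $b^1_n\ge 0$, multiplying this through by $b^1_n$ gives $b^1_n{\tilde X}^n(\infty)\ge \delta (b^1_n)^2-n\delta\, b^1_n$, hence $\ex[b^1_n{\tilde X}^n(\infty)]\ge \delta\,\ex[(b^1_n)^2]-n\delta\,\ex[b^1_n]$. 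Substituting this lower bound into the identity above and solving for $\ex[(b^1_n)^2]$ isolates the square at the cost of a surviving multiple of $\ex[b^1_n]$, which is exactly the first-moment quantity evaluated in Lemma~\ref{lem:b1_mean} and can be re-expressed through the minimum appearing in \eqref{eqn:x_function}; this is the origin of the $(2n+1)\delta\,\ex[\,\cdot\,]$ term in the statement.

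The only real difficulty is that applying $\calg_n$ to $x^2$ does not produce $(b^1_n)^2$ directly but the cross moment $\ex[b^1_n{\tilde X}^n(\infty)]$, which has no closed form; the entire argument hinges on recognizing that the structure of $b^1_n$ as a minimum lets this cross moment be bounded below, rather than computed, and that such a one-sided bound is precisely what converts the equality into an \emph{upper} bound on $\ex[(b^1_n)^2]$. Since the inequality ${\tilde X}^n(\infty)\ge\delta b^1_n-n\delta$ holds on the whole state space, the slack it introduces off the event $\{{\tilde X}^n(\infty)/\delta+n\le {\tilde Y}^n(\infty)/\eta+n+\kappa n^\al\}$ only loosens the estimate and cannot invalidate it. The remaining work is careful bookkeeping of the $\delta$ and $\eta$ factors together with $\g_n=n\mu$ to match the exact constants displayed in the claim.
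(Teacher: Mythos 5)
Your proposal is correct and is essentially the paper's own proof: the paper likewise plugs $u(x,y)=x^2$ into the vanishing identity of Lemma~\ref{lem:vanishing}, rewrites the resulting cross term via $2{\tilde X}^n(\infty)-\delta = 2({\tilde X}^n(\infty)+n\delta)-(2n+1)\delta$, and then uses exactly your key inequality — that the minimum defining $b^1_n$ is nonnegative and bounded by its first argument, so the cross moment dominates the second moment — to convert the identity into the stated upper bound. The only differences are cosmetic (you isolate the cross moment first and substitute, while the paper shifts the $(2n+1)\delta$ term to the other side first), together with the same bookkeeping of $\delta$, $\eta$, $\mu$ factors that the paper itself treats loosely.
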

%\begin{lem}
%\label{lem:2nd_moment_b_1}
%\begin{align*}\ex[b_1^2]\le \frac12\ex[\g (2\delta {\tilde X}^n(\infty) +\delta^2)]+(2n+1)\delta \ex\left[\left({\tilde X}^n(\infty)+n\delta\right)\wedge \left({\tilde Y}^n(\infty)+ (n+\kappa n^\al)\delta\right)\right] \end{align*}
%\end{lem}
\begin{proof}[Proof of Lemma \ref{lem:2nd_moment_b_1}]
Set $u(x,y)=x^2$, we have, 
\begin{align*}
\ex\left[\g_n  (2\delta {\tilde X}^n(\infty) +\delta^2) -\frac{1}{\delta} \left[\left({\tilde X}^n(\infty)+n\delta\right)\wedge \left(\frac{{\tilde Y}^n(\infty)\delta}{\eta}+ (n+\kappa n^\al)\delta\right)\right](2\delta {\tilde X}^n(\infty) -\delta^2)\right]=0.
\end{align*}
Thus,
\begin{align*}
\ex[\g_n  (2\delta {\tilde X}^n(\infty) +\delta^2)] =&\ex\left[\left[\left({\tilde X}^n(\infty)+n\delta\right)\wedge \left(\frac{{\tilde Y}^n(\infty)\delta}{\eta}+ (n+\kappa n^\al)\delta\right)\right](2 {\tilde X}^n(\infty) -\delta)\right]\\ =& \ex\left[\left[\left({\tilde X}^n(\infty)+n\delta\right)\wedge \left(\frac{{\tilde Y}^n(\infty)\delta}{\eta}+ (n+\kappa n^\al)\delta\right)\right](2 {\tilde X}^n(\infty) +2n\delta-(2n+1)\delta)\right].
\end{align*}
Hence, 
\begin{align*}
&\ex[\g_n  (2\delta {\tilde X}^n(\infty) +\delta^2)]+(2n+1)\delta \ex\left[\left({\tilde X}^n(\infty)+n\delta\right)\wedge \left(\frac{{\tilde Y}^n(\infty)\delta}{\eta}+ (n+\kappa n^\al)\delta\right)\right]\\=& \ex\left[\left[\left({\tilde X}^n(\infty)+n\delta\right)\wedge \left(\frac{{\tilde Y}^n(\infty)\delta}{\eta}+ (n+\kappa n^\al)\delta\right)\right](2 {\tilde X}^n(\infty)+2n\delta)\right]
\\ \ge & 2\ex\left[\left[\left({\tilde X}^n(\infty)+n\delta\right)\wedge \left(\frac{{\tilde Y}^n(\infty)\delta}{\eta}+ (n+\kappa n^\al)\delta\right)\right]\right]^2.
\end{align*}
This produces an upper bound on the second moment of $b^1_n$.
\end{proof}
Similarly,
\begin{lem}
\label{lem:2nd_moment_b_2}
\begin{align*}	\ex[(b^2_n)^2]\le &(\delta \gamma + \g_n \delta)\ex[({\tilde Y}^n(\infty))] - \ex[	\ex[(b^1_n)^2]]\ex[({\tilde Y}^n(\infty))^2]-\g_n \delta\ex[({\tilde X}^n(\infty))]\\ &+\frac12[ \delta^2\gamma +(\delta+ \delta\kappa n^\al)\ex[b^2_n({\tilde X}^n(\infty), {\tilde Y}^n(\infty))]]
\end{align*}
\end{lem}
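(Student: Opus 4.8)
The plan is to follow the template of Lemma~\ref{lem:2nd_moment_b_1} verbatim, now with the test function $u(x,y)=y^2$, and then to deal with the coordinate coupling that is absent in the $b^1_n$ computation. Applying the stationarity identity of Lemma~\ref{lem:vanishing} to $u(x,y)=y^2$, the two $x$-transitions of $\calg_n$ contribute nothing, since $u$ does not depend on $x$, and the identity collapses to
\begin{align*}
\gamma_n\big(2\delta\,\ex[{\tilde Y}^n(\infty)]+\delta^2\big)=\nu\,\ex\big[b^2_n\big(2\delta{\tilde Y}^n(\infty)-\delta^2\big)\big].
\end{align*}
This is the exact analogue of the relation obtained from $x^2$ in Lemma~\ref{lem:2nd_moment_b_1}; the $\delta\gamma$ and $\delta^2\gamma$ terms in the claimed bound already originate here. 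The difficulty is that, unlike $b^1_n$ which is pinned to ${\tilde X}^n(\infty)$, the quantity $b^2_n$ mixes both coordinates, so one cannot read off $\ex[(b^2_n)^2]$ from this identity alone.

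The device that converts the linear-in-$b^2_n$ relation into a genuine second moment is the elementary additive identity
\begin{align*}
b^1_n\big({\tilde X}^n(\infty),{\tilde Y}^n(\infty)\big)+b^2_n\big({\tilde X}^n(\infty),{\tilde Y}^n(\infty)\big)=\frac{{\tilde Y}^n(\infty)}{\eta}+n+\kappa n^\al,
\end{align*}
which is just $(A\wedge B)+(B-A)^+=B$ applied to $A=\frac{{\tilde X}^n(\infty)}{\delta}+n$ and $B=\frac{{\tilde Y}^n(\infty)}{\eta}+n+\kappa n^\al$. Writing $b^2_n=B-b^1_n$ and squaring gives $(b^2_n)^2=B^2-2b^1_n B+(b^1_n)^2$, and taking expectations expresses $\ex[(b^2_n)^2]$ through $\ex[B^2]$, $\ex[b^1_n B]$, and $\ex[(b^1_n)^2]$. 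Here $\ex[B^2]$ expands into $\eta^{-2}\ex[({\tilde Y}^n(\infty))^2]$ plus lower-order multiples of $\ex[{\tilde Y}^n(\infty)]$, which explains the $\ex[({\tilde Y}^n(\infty))^2]$ factor in the statement, while $\ex[(b^1_n)^2]$ is supplied directly by Lemma~\ref{lem:2nd_moment_b_1} and $\ex[b^1_n]=\g_n/\mu$ by Lemma~\ref{lem:b1_mean} (this is where the $\g_n\delta$ terms enter).

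The main obstacle is the single cross term $\ex[b^1_n B]$, equivalently $\ex[b^1_n{\tilde Y}^n(\infty)]$, which encodes precisely the coupling of jobs and servers through the $\min$ and the positive part and has no counterpart in the $b^1_n$ argument. I would control it by Cauchy--Schwarz,
\begin{align*}
\ex[b^1_n{\tilde Y}^n(\infty)]\le\big(\ex[(b^1_n)^2]\,\ex[({\tilde Y}^n(\infty))^2]\big)^{1/2},
\end{align*}
which is exactly what produces the product $\ex[(b^1_n)^2]\,\ex[({\tilde Y}^n(\infty))^2]$ appearing in the claimed inequality; the second moment $\ex[({\tilde Y}^n(\infty))^2]$ is finite by the finite-third-moment conclusion of Lemma~\ref{lem:stationary_finiteness}. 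Finally I would substitute the first moments already computed---$\ex[b^2_n]=\gamma/\nu$ from Lemma~\ref{lem:b2_mean}, the bound \eqref{eqn:Y_mean} on $\ex[{\tilde Y}^n(\infty)]$, and $\ex[b^1_n]=\g_n/\mu$---and collect terms, tracking the factors of $\delta$, $\eta$, $\g_n$, $\gamma_n$ and $\nu$, to arrive at the stated bound. The only genuinely delicate point is the sign and magnitude bookkeeping around the Cauchy--Schwarz step, since it is what determines whether the cross term can be absorbed so as to leave a clean upper bound on $\ex[(b^2_n)^2]$.
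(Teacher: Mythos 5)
Your setup is fine through the additive identity, but the proof does not close: the Cauchy--Schwarz step points in the wrong direction. Writing $B=\frac{{\tilde Y}^n(\infty)}{\eta}+n+\kappa n^\al$, your decomposition gives
\begin{align*}
\ex[(b^2_n)^2]=\ex[B^2]-2\,\ex[b^1_nB]+\ex[(b^1_n)^2],
\end{align*}
so an \emph{upper} bound on $\ex[(b^2_n)^2]$ requires a \emph{lower} bound on the cross term $\ex[b^1_nB]$. Cauchy--Schwarz supplies only the upper bound $\ex[b^1_nB]\le(\ex[(b^1_n)^2]\,\ex[B^2])^{1/2}$; substituting it into your expansion bounds $\ex[(b^2_n)^2]$ from \emph{below}, not above. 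This is not a matter of bookkeeping that can be fixed by tracking constants: structurally, in your decomposition $\ex[(b^1_n)^2]$ enters with a plus sign, whereas the asserted bound carries the product $\ex[(b^1_n)^2]\,\ex[({\tilde Y}^n(\infty))^2]$ with a minus sign, and your only stationarity identity (from $u=y^2$) pins down $\ex[b^2_n{\tilde Y}^n(\infty)]$ --- it never produces the cross moment $\ex[b^1_n{\tilde Y}^n(\infty)]$ that you need to control.

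The paper closes this gap with a \emph{second} test function, $u(x,y)=xy$, whose stationarity identity reads (in scaled form, after dividing by $\delta$)
\begin{align*}
\nu\,\ex[b^2_n{\tilde X}^n(\infty)]=\g_n\,\ex[{\tilde Y}^n(\infty)]+\gamma_n\,\ex[{\tilde X}^n(\infty)]-\mu\,\ex[b^1_n{\tilde Y}^n(\infty)],
\end{align*}
where the troublesome cross term enters with a \emph{minus} sign; there an upper bound via Cauchy--Schwarz legitimately yields a lower bound on $\ex[b^2_n{\tilde X}^n(\infty)]$, which is exactly the direction needed because that quantity is later subtracted. The paper then combines this with the $u=y^2$ identity and recognizes the second moment through $z\,z^+=(z^+)^2$, i.e.\ $(b^2_n)^2=b^2_n\bigl(\frac{{\tilde Y}^n(\infty)}{\eta}-\frac{{\tilde X}^n(\infty)}{\delta}+\kappa n^\al\bigr)$, assembling that linear combination from the two identities and $\ex[b^2_n]=\gamma/\nu$. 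Incidentally, your additive identity is not wasted: since $b^1_nb^2_n\ge0$, it gives $(b^2_n)^2=b^2_nB-b^1_nb^2_n\le b^2_nB$, and $\ex[b^2_nB]$ is computable from your $y^2$ identity together with Lemma~\ref{lem:b2_mean}; that is a correct and arguably cleaner second-moment bound, but it is a \emph{different} bound from the one the lemma asserts, so it does not prove the statement as given.
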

\begin{proof}[Proof of Lemma \ref{lem:2nd_moment_b_2}]
Let $u(x,y)=xy$, we have, from Lemma   \ref{lem:vanishing},
\begin{align*}
&\ex \left[\g _n \delta {\tilde Y}^n(\infty) - \frac{1}{\delta} \left[\left({\tilde X}^n(\infty)+n\delta\right)\wedge \left(\frac{{\tilde Y}^n(\infty)\delta}{\eta}+ (n+\kappa n^\al)\delta\right)\right] \delta {\tilde Y}^n(\infty)\right. \\ &+ \left. \gamma \delta {\tilde X}^n(\infty) - \frac{1}{\delta}\left[\left(\frac{{\tilde Y}^n(\infty)\delta}{\eta}-{\tilde X}^n(\infty)+\delta\kappa n^\al \right)^+\right] \delta {\tilde X}^n(\infty) \right]=0.
\end{align*}
Therefore, 
\begin{align*}
	&\ex\left[ \frac{1}{\delta}\left[\left(\frac{{\tilde Y}^n(\infty)\delta}{\eta}-{\tilde X}^n(\infty)+\delta\kappa n^\al \right)^+\right] \delta {\tilde X}^n(\infty) \right]\\= &
	 \ex \left[\g_n  \delta {\tilde Y}^n(\infty) - \frac{1}{\delta} \left[\left({\tilde X}^n(\infty)+n\delta\right)\wedge \left(\frac{{\tilde Y}^n(\infty)\delta}{\eta}+ (n+\kappa n^\al)\delta\right)\right] \delta {\tilde Y}^n(\infty)-\gamma \delta {\tilde X}^n(\infty)\right]
	 \\ \ge & \ex \left[\g_n  \delta {\tilde Y}^n(\infty) -\gamma \delta {\tilde X}^n(\infty)\right]- \ex[b_1^2] \ex[({\tilde Y}^n(\infty))^2]
\end{align*}
where the inequality is due to the Cauchy-Schwartz inequality.  Meanwhile, $u(x,y)=y^2$ leads to,
\begin{align*}
	\ex\left[\gamma [2\delta {\tilde Y}^n(\infty)+\delta^2] - \frac{1}{\delta}\left[\left(\frac{{\tilde Y}^n(\infty)\delta}{\eta}-{\tilde X}^n(\infty)+\delta\kappa n^\al \right)^+\right][2\delta {\tilde Y}^n(\infty) -\th^2]\right]=0.
\end{align*}
Hence, 
\begin{align*}
	\ex[\gamma [2\delta {\tilde Y}^n(\infty)+\delta^2] ]&=\ex\left[\frac{1}{\delta}\left[\left(\frac{{\tilde Y}^n(\infty)\delta}{\eta}-{\tilde X}^n(\infty)+\delta\kappa n^\al \right)^+\right][2\delta {\tilde Y}^n(\infty) -\th^2]\right]
\end{align*}
Plug it into the previous one, we have, 
\begin{align*}
	&\ex[\gamma [2\delta {\tilde Y}^n(\infty)+\delta^2] ]+2\ex \left[\g \delta {\tilde Y}^n(\infty) -\gamma \delta {\tilde X}^n(\infty)\right]- 2\ex[b_1^2] \ex[({\tilde Y}^n(\infty))^2]\\ \ge &\ex\left[\frac{1}{\delta}\left[\left(\frac{{\tilde Y}^n(\infty)\delta}{\eta}-{\tilde X}^n(\infty)+\delta\kappa n^\al \right)^+\right][2\delta {\tilde Y}^n(\infty)- 2\delta {\tilde X}^n(\infty)-\delta^2]\right]
	\\ = &  \ex\left[\frac{1}{\delta}\left[\left(\frac{{\tilde Y}^n(\infty)\delta}{\eta}-{\tilde X}^n(\infty)+\delta\kappa n^\al \right)^+\right][2\delta \frac{{\tilde Y}^n(\infty)\delta}{\eta}- 2\delta {\tilde X}^n(\infty)+\delta^2\kappa n^\al]\right]\\& -(\delta+ \delta\kappa n^\al)\ex\left({\tilde Y}^n(\infty)-{\tilde X}^n(\infty)+\delta\kappa n^\al \right)^+
	\\=& 2\ex[b_2^2({\tilde X}^n(\infty), {\tilde Y}^n(\infty))]-(\delta+ \delta\kappa n^\al)\ex[b_2({\tilde X}^n(\infty), {\tilde Y}^n(\infty))]
\end{align*}
Therefore, we have,
\begin{align*}
\ex[b_2^2({\tilde X}^n(\infty), {\tilde Y}^n(\infty))] %\le & \frac12 \left\{ \ex[\gamma [2\delta {\tilde Y}^n(\infty)+\delta^2] ]+2\ex \left[\g \delta {\tilde Y}^n(\infty) -\gamma \delta {\tilde X}^n(\infty)\right]- 2\ex[b_1^2] \ex[({\tilde Y}^n(\infty))^2]+(\delta+ \delta\kappa n^\al)\ex[b_2({\tilde X}^n(\infty), {\tilde Y}^n(\infty))]\right\}
\le &  (\delta \gamma + \g_n \delta)\ex[({\tilde Y}^n(\infty))] - \ex[b_1^2]\ex[({\tilde Y}^n(\infty))^2]-\g_n \delta\ex[({\tilde X}^n(\infty))]\\ &+\frac12[ \delta^2\gamma +(\delta+ \delta\kappa n^\al)\ex[b_2({\tilde X}^n(\infty), {\tilde Y}^n(\infty))]].
\end{align*}
\end{proof}

\subsubsection{Derivative Bounds}
\label{sec:gradient_bound}

Recall that we need to bound terms related to the derivatives of solution to the Stein equation $\calg u(x,y) = H(x,y)$, with  $H(x,y) = h(x, y) -\ex h(X, Y)$, and 
\begin{align*}
\calg u(x, y) = & \delta[\g_n -b_1(x,y)\mu]\frac{\partial}{\partial x}u(x,y) + \th[ \gamma -b_2(x,y)\nu] \frac{\partial}{\partial x}u(x,y)  +\frac{1}{2}\frac{\partial^2}{\partial x^2}  u(x,y).
\end{align*}
Note that the second order derivative is only related to the $x$ direction, which reflects the fact that the randomness in the two-dimensional diffusion process comes from a one dimensional Brownian motion. 
This type of equation belongs to the family of degenerated Kolmogorov equations, for background, and detailed analysis, see, e.g. \cite{MENOZZI2018756, Talay2002StochasticHS, soizebook}. Furthermore, the special form of the differential equation in our system allows us to further reduce it to an ordinary differential equation(ODE). More specifically, note that the Stein equation bears the following form, 
\begin{align}
\label{eqn:pde+original}
\delta[\g_n -b_1(x,y)\mu]u_x(x,y) + \eta[ \gamma -b_2(x,y)\nu] u_y(x,y)  +\frac{1}{2}u_{xx}(x,y)= H(x,y).
\end{align}
Consider two separate domains. On $\{ \frac{x}{\delta}+ n \ge \frac{y}{\eta} +(n+\kappa n^\al)\}$ %$\left(x+n\delta\right) \ge y+ (n+\kappa n^\al)\delta$, 
\eqref{eqn:pde+original} becomes, 
\begin{align}
\label{eqn:pde+original_I}
\delta\left[\g_n -\left(\frac{y}{\eta} +(n+\kappa n^\al)\right)\mu\right]u_x(x,y) + \eta \gamma u_y(x,y)+\frac{1}{2}u_{xx}(x,y)= H(x,y).
\end{align}
or equivalently, 
\begin{align}
-\left[\frac{y\delta}{\eta}\mu + \kappa n^\al\delta\mu\right]u_x(x,y) +\eta \gamma u_y(x,y) +\frac{1}{2}u_{xx}(x,y)= H(x,y).
\end{align}
When $\frac{x}{\delta}+ n<\frac{y}{\eta} +(n+\kappa n^\al)$, 
%$\left(x+n\delta\right) < \left(y+ (n+\kappa n^\al)\delta\right)$, 
\eqref{eqn:pde+original} takes the form, 
\begin{align}
&\delta\left[\g_n -\frac{1}{\delta}(x+n\delta)\mu\right]u_x(x,y) \nonumber \\ &+ \delta\left[ \gamma -\left(\frac{y\delta}{\eta}-x+\kappa n^\al\right)\nu\right] u_y(x,y)  +\frac{1}{2}u_{xx}(x,y)= H(x,y).\label{eqn:pde+original_II}
\end{align}
or equivalently,
\begin{align}
\label{eqn:pde+original_II}
-x\mu u_x(x,y) -\left(\frac{y\delta}{\eta}-x\right)\nu u_y(x,y)  +\frac{1}{2}u_{xx}(x,y)= H(x,y).
\end{align}

\subsubsection{The solution in Domain I}

In Domain I: $\{ (x, y): \frac{x}{\delta}+ n \ge \frac{y}{\eta} +(n+\kappa n^\al)\}$, we have the equation \eqref{eqn:pde+original_I}.
%\begin{align*}
%\delta\left[\g-\frac{1}{\delta}(y+ (n+\kappa n^\al)\delta)\mu\right]u_x(x,y) +\eta \gamma u_y(x,y) +\frac{1}{2}u_{xx}(x,y)= H(x,y).
%\end{align*}
From well-known results on linear elliptic equation, see e.g. \cite{krylov1996lectures}, we know that the solution exists, and its Sobolev norm of $u$ is bounded by that of the $H(x,y)$ and the boundary condition, that is, $u_x$ and $u_y$ are bounded in $L_p$ space for a proper $p$. The solution can also be observed to have the following presentation,  
\begin{align*}
-\left[\frac{y\delta}{\eta}\mu + \kappa n^\al\delta\mu\right]u_x(x,y)  +\frac{1}{2}u_{xx}(x,y) &= H(x,y) + Q(x,y)\\  \eta \gamma u_y &= -Q(x,y).
\end{align*}
for some function $Q(x,y)$. Thus, 
\begin{align*}
u(x,y)= & \int_0^x \left[\int_0^w \exp\left(-\int_0^z 2(y+\kappa n^\al )\mu du\right) [H(x,y) + Q(x,y)]dz\right]\\ &\cdot \left[ \exp\left(\int_0^w 2(y+\kappa n^\al )\mu du \right)\right]dw.
\end{align*}
Since the solution to a linear second order differential equation, 
\begin{align*}
f''(x) + a(x) f'(x) + b(x)=0,
\end{align*}
with proper boundary condition will have a solution in the form of 
\begin{align*}
f(x)= \int_0^x \left[\int_0^w \exp\left(\int_0^za(u) du\right) b(z) dz\right]\left[ \exp\left(-\int_0^w a(u) du \right)\right]dw.
\end{align*}
and 
\begin{align*}
f'(x) = \left[\int_0^x \exp\left(\int_0^za(u) du\right) b(z) dz\right]\left[ \exp\left(-\int_0^x a(u) du \right)\right].
\end{align*}

Direct calculations, similar to those in \cite{doi:10.1287/moor.2013.0593, gurvich2014, braverman2017},  thus provides us with the following bound for the solutions in domain I. 
\begin{lem}
\label{lem:gradient_bound_in_domain_I}
$u_{xxx}$ and $u_{yy}$ are bounded quantities in domain I.
\end{lem}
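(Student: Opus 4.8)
The plan is to exploit the special structure of the operator in Domain~I. In \eqref{eqn:pde+original_I} the second-order part acts only in $x$, the drift in the $y$-direction is the constant $\eta\gamma$, and the drift in the $x$-direction is affine in $y$ and \emph{independent of} $x$. Writing $c(y) := \frac{y\delta}{\eta}\mu + \kappa n^\al\delta\mu$, for each fixed $y$ the Stein equation is a constant-coefficient (in $x$) linear second-order ODE, which is precisely what the displayed double-integral representation solves. Probabilistically, the diffusion \eqref{eqn:diffusion} restricted to Domain~I has a deterministic component $Y_t = y + \eta\gamma t$ and a component $X_t = x - \int_0^t c(Y_s)\,ds + \tfrac12 W_t$ that is Gaussian with variance $t/4$ and mean affine in $x$; this Gaussian structure is what ultimately delivers the bounds. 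First I would record the elliptic/Sobolev estimate already invoked, giving $u_x,u_y\in L_p$, and use it as the base case of a bootstrap.

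For $u_{xxx}$ I would differentiate the scalar ODE. With $Q(x,y) = -\eta\gamma u_y$ the equation reads $\tfrac12 u_{xx} = c(y)u_x + H + Q$; differentiating once in $x$ and using that $c(y)$ is independent of $x$ gives $\tfrac12 u_{xxx} = c(y)u_{xx} + H_x + Q_x$, where $Q_x = -\eta\gamma u_{xy}$. Thus it suffices to bound $u_{xx}$, $H_x$ and the mixed derivative $u_{xy}$. Boundedness of $u_{xx}$ follows from the previous display once $u_x$, $H$, $Q$ are controlled; $H_x = h_x$ and its higher $x$-derivatives are controlled by the polynomial-growth/integrability hypotheses on $h$ carried through Theorem~\ref{thm:main}; and the moment estimates of Section~\ref{sec:moment_bound} (Lemmas~\ref{lem:b1_mean}--\ref{lem:2nd_moment_b_2}) keep the resulting coefficients finite in expectation once the bounds are fed back into the error terms $E_1,\dots,E_4$.

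For $u_{yy}$ I would differentiate \eqref{eqn:pde+original_I} once in $y$. Since $c'(y) = \frac{\delta\mu}{\eta}$ is constant and $c''\equiv 0$, this yields $\eta\gamma\,u_{yy} = H_y + c'(y)u_x + c(y)u_{xy} - \tfrac12 u_{xxy}$, so that $u_{yy}$ is bounded as soon as $H_y$, $u_x$, and the mixed derivatives $u_{xy}$, $u_{xxy}$ are. The mixed derivatives are obtained by differentiating the $x$-integral representation in $y$, using that the only $y$-dependence of the $x$-ODE sits in the affine coefficient $c(y)$ and in the forcing through $Q$; the constancy of $c'$ and the deterministic $Y_t = y + \eta\gamma t$ prevent these differentiations from generating new, uncontrolled terms. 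In particular both target quantities reduce to the single task of bounding $u_{xy}$ and $u_{xxy}$.

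The main obstacle will therefore be the control of these mixed derivatives, which couples the $x$- and $y$-directions through $Q = -\eta\gamma u_y$, together with the convergence of the representing integrals. Near $t=0$ the variance $t/4$ degenerates, so derivatives of the Gaussian kernel blow up like $t^{-k/2}$, and only the ergodic decay of the semigroup as $t\to\infty$ rescues the integral; making the two ends meet for the third-order ($u_{xxx}$) and mixed ($u_{xy},u_{xxy},u_{yy}$) derivatives is the crux. A secondary difficulty is that $c(y)$ grows linearly in $y$, so the bounds must be shown uniform over the unbounded Domain~I; here I would use the domain constraint $\frac{x}{\delta}+n \ge \frac{y}{\eta} + n + \kappa n^\al$ to trade growth in $y$ against the exponential weights $\exp\bigl(\pm\int_0^w 2c\,du\bigr)$ appearing in the integral representation, so that the a priori unbounded factors cancel and the derivatives remain bounded.
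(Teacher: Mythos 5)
Your setup coincides with the paper's: the same splitting of \eqref{eqn:pde+original_I} into an $x$-ODE with forcing $H+Q$ where $Q=-\eta\gamma u_y$, the same explicit integral representation, and the same $L_p$ base estimate quoted from elliptic theory; your differentiation identities ($\tfrac12 u_{xxx}=c(y)u_{xx}+H_x-\eta\gamma u_{xy}$ and $\eta\gamma u_{yy}=H_y+c'(y)u_x+c(y)u_{xy}-\tfrac12 u_{xxy}$) are also algebraically correct. The genuine gap is in how you try to close the argument. The paper closes it by direct calculation on the explicit representation (in the style of the cited Stein-method references), treating the forcing $H+Q$ as a known function controlled by the Sobolev bound on $u_y$ already in hand. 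Your bootstrap instead reduces $u_{xxx}$ to $u_{xy}$, and $u_{yy}$ to $u_{xy}$ and $u_{xxy}$, and then proposes to obtain those mixed derivatives ``by differentiating the $x$-integral representation in $y$.'' That step is circular: the only $y$-dependence of the representation sits in $c(y)$ \emph{and in the forcing} $Q=-\eta\gamma u_y$, so a $y$-derivative of the representation produces $Q_y=-\eta\gamma u_{yy}$ under the integral sign --- exactly the quantity you are trying to bound. The loop $u_{yy}\to u_{xy}\to u_{yy}$ never terminates unless you add a device you do not supply (an a priori estimate for the coupled system, a fixed-point or Gronwall-type argument in $y$, or the kernel calculations of the cited literature), and you yourself concede in the final paragraph that this coupling and the $t\to 0$ degeneracy are ``the crux'' while leaving them unresolved. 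A proof whose crux is acknowledged but not carried out has not established the lemma.

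A secondary problem: your bounds invoke $H_x$ and $H_y$, i.e.\ derivatives of the data $h$. The only hypothesis the paper carries (Theorem \ref{thm:main}) is integrability of $(|x|^p+|y|^p)h(x,y)$, which neither guarantees that $h_x,h_y$ exist nor controls them pointwise; the paper's route avoids this by differentiating the kernel in its integral representation rather than the data. Your closing suggestion --- trading the linear growth of $c(y)$ against the exponential weights using the domain constraint --- is plausible and is indeed implicit in the paper's ``direct calculations,'' but as stated it is an intention, not an estimate: you would need to exhibit the cancellation explicitly, uniformly over the unbounded Domain I, before the lemma follows.
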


\subsubsection{The solution in Domain II}

In Domain II: $\{ (x, y):\frac{x}{\delta}+ n < \frac{y}{\eta} +(n+\kappa n^\al)\}$, we have the equation \eqref{eqn:pde+original_II}. 
%
%\begin{align*}
%&\delta\left[\g_n -\frac{1}{\delta}(x+n\delta)\mu\right]u_x(x,y) \\& + \delta\left[ \gamma -\frac{1}{\theta}(y-x+\theta\kappa n^\al)\nu\right] u_y(x,y)  +\frac{1}{2}u_{xx}(x,y)= H(x,y).
%\end{align*}
%Or
%\begin{align}
%	\label{eqn:domain_II}
%-xu_x(x,y) - (x-y) u_y + \frac{1}{2}u_{xx}(x,y)= H(x,y).
%\end{align}
The solution in domain I provide the values of $u(x,y)$ on the line $\{ (x, y):\frac{x}{\delta}+ n = \frac{y}{\eta} +(n+\kappa n^\al)\}$, this serves as part of the boundary conditions for the solution in domain II, the other part is, of course, the original boundary condition. We will provide the necessary estimation via {\it a priori} estimation of its solution, that is, obtain those estimation without solving the equation. 
\eqref{eqn:pde+original_II} implies that, for any smooth function  $\phi(x,y)$ of polynomial growth, since $H(x,y)$ is assumed to be integrable, 
\begin{align*}
&\int_{-\infty}^\infty \int_{-\infty}^{y+\kappa} [-x u_x- (y-x) u_y +\frac12 u_{xx} ) \phi(x,y) dx dy\\=&\int_{-\infty}^\infty \int_{-\infty}^{y+\kappa} \phi(x,y) H(x,y) dx dy. 
\end{align*}
Note that, for the ease of exposition, we only discuss the case $\eta=\delta$. It is easy to see that the results extend to general case.  
Integration by part gives us, 
\begin{align}
-&\int_{-\infty}^\infty \int_0^{y+\kappa}  2u(x,y) \phi(x,y) + u(x,y) [x\phi_x + (y-x) \phi_y + \frac12\phi_{xx} ] dx dy \nonumber \\= & \int_{-\infty}^\infty y\phi(y,y) u(y,y) dy+\int_{-\infty}^\infty \int_{-\infty}^{y+\kappa} \phi(x,y) H(x,y) dx dy. \label{eqn:ibp_first}
\end{align}

\begin{lem}
\label{lem:Estimate_unified}
For any bounded set $\Omega$ in Domain II, there exists a constant $C_1$, such that, 
\begin{align*}
\int \int_\Omega | u(x,y) | dxdy \le C_1.
\end{align*}
\end{lem}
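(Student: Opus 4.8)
The plan is to read the weak identity \eqref{eqn:ibp_first} as the action of $u$ against the formal adjoint of the Domain~II operator, and then to extract the $L^1$ bound by duality. Writing the operator in \eqref{eqn:pde+original_II} (with $\eta=\delta$) as $Lu=-xu_x-(y-x)u_y+\tfrac12 u_{xx}$, the integrations by parts leading to \eqref{eqn:ibp_first} identify its formal adjoint as
\begin{align*}
L^{*}\phi \;=\; \tfrac12\phi_{xx}+x\phi_x+(y-x)\phi_y+2\phi ,
\end{align*}
so that \eqref{eqn:ibp_first} is exactly the statement
\begin{align*}
\int\!\!\int u\,L^{*}\phi \;=\; -\int_{-\infty}^{\infty} y\,\phi(y,y)\,u(y,y)\,dy \;-\; \int\!\!\int \phi\, H .
\end{align*}
The crucial feature is the zeroth order coefficient $+2\phi$, inherited from the two inward (Ornstein--Uhlenbeck type) drifts; combined with the drift it forces the test functions constructed below to decay rapidly.

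To bound $\int\!\!\int_\Omega|u|$ I would use the duality representation $\int\!\!\int_\Omega|u|=\sup\{\int\!\!\int_\Omega u\,g:\ g\in C_c^\infty(\Omega),\ \|g\|_\infty\le1\}$. For each admissible $g$ I solve the adjoint equation $L^{*}\phi=g$, selecting the solution that decays into Domain~II and whose trace on the interface $\{x/\delta+n=y/\eta+(n+\kappa n^\al)\}$ matches the values of $u$ supplied by the Domain~I solution. Substituting such a $\phi$ into the identity above gives
\begin{align*}
\int\!\!\int_\Omega u\,g \;=\; -\int_{-\infty}^{\infty} y\,\phi(y,y)\,u(y,y)\,dy \;-\; \int\!\!\int \phi\, H ,
\end{align*}
so the lemma reduces to bounding the right-hand side uniformly in $g$, after which taking the supremum yields $C_1$.

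The two terms on the right are controlled as follows. The source term is finite because $H(x,y)=h(x,y)-\ex[h(X_\infty,Y_\infty)]$ inherits the integrability hypothesis $(|x|^p+|y|^p)h\in L^1$ for $p\le5$ of Theorem~\ref{thm:main}, while $\phi$ decays faster than any polynomial: the one–dimensional model $\tfrac12\phi_{xx}+x\phi_x+2\phi=0$ already forces Gaussian-type decay in $x$, and the positive mass term propagates decay in $y$ through the coupling. For existence, smoothness, and these growth bounds one invokes hypoellipticity: the diffusion field $\partial_x$ and the drift $V_0=x\partial_x+(y-x)\partial_y$ satisfy H\"ormander's bracket condition, since $[\partial_x,V_0]=\partial_x-\partial_y$ together with $\partial_x$ spans $\mathbb{R}^2$, so $L^{*}$ is hypoelliptic and admits a smooth, rapidly decaying fundamental solution in the degenerate Kolmogorov theory of \cite{MENOZZI2018756}, with the linear elliptic estimates of \cite{krylov1996lectures} bounding $\phi$ on bounded sets. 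The interface term is controlled because the trace $u(y,y)$ comes from the Domain~I solution, whose derivatives are bounded by Lemma~\ref{lem:gradient_bound_in_domain_I}, so pairing it against the fast-decaying $\phi(y,y)$ is uniformly small.

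The main obstacle is the construction of $\phi$ with a \emph{global} decay bound for the degenerate adjoint: because $L^{*}$ is elliptic only in $x$, one cannot apply standard interior Schauder or $L^p$ estimates directly in the $y$ direction, and one must exploit both the $+2\phi$ term and the bracket-generated diffusion in $y$ to rule out spurious growth along the degenerate characteristics and to guarantee convergence of $\int\!\!\int\phi H$. The secondary difficulty is to arrange the interface data for $\phi$ so that the boundary integral stays bounded independently of $g$; this is precisely the point at which the estimate is tied back to the Domain~I bounds of Lemma~\ref{lem:gradient_bound_in_domain_I}.
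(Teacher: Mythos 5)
Your strategy is genuinely different from the paper's. The paper never solves an auxiliary equation: it tests the identity \eqref{eqn:ibp_first} with $\phi=\eta u$, a smooth weight times the \emph{unknown solution itself}, uses the Domain II equation \eqref{eqn:pde+original_II} plus one more integration by parts to cancel the $\eta u_{xx}$ term, and is left with $\int\!\!\int [x\eta_x+(y-x)\eta_y+\eta_{xx}]\,u\,dx\,dy\le C$, where $C$ collects the known quantities (the source $H$ and the interface data inherited from Domain I via Lemma \ref{lem:gradient_bound_in_domain_I}); a choice of weight such as $\eta=\frac{x^2}{2}{\bf 1}\Omega$ then yields the bound. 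Your duality scheme --- realizing $\int\!\!\int_\Omega|u|$ as a supremum of $\int\!\!\int u\,g$ over $\|g\|_\infty\le 1$ and solving $L^*\phi=g$ --- would, if completed, even be cleaner on one point: it bounds $\int\!\!\int_\Omega|u|$ directly, whereas the paper's multiplier argument as literally written only controls the \emph{signed} weighted integral $\int\!\!\int_\Omega (x^2+1)\,u$.

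However, there is a genuine gap at the core of your plan: everything rests on producing, for every admissible $g$, a solution of $L^*\phi=g$ in Domain II that decays rapidly and whose size (near $\Omega$, on the interface, and in the norms paired with $H$) is bounded \emph{uniformly in $g$}. This is not supplied by the results you invoke, and it is at least as hard as the lemma itself. Concretely: (i) the zeroth-order coefficient of $L^*$ is $+2>0$, the wrong sign for any maximum principle or for Krylov-type bounds $\|\phi\|_\infty\le C\|g\|_\infty$, which require a nonpositive zeroth-order term; (ii) the drift of $L^*$, $x\partial_x+(y-x)\partial_y$, is expanding (both eigenvalues of its linearization equal $1$), so decay of $\phi$ at infinity along the degenerate direction cannot be obtained from hypoellipticity --- H\"ormander's condition gives interior smoothness, not global bounds or decay rates, and \cite{MENOZZI2018756} treats whole-space Kolmogorov equations, not this boundary-value problem; (iii) $L^*$ is a Fokker--Planck-type operator whose kernel contains rapidly decaying Gaussian-type densities, so ``the solution that decays'' is not unique, and solvability in a class of decaying functions carries a compatibility constraint (integrating $L^*\phi=g$ over Domain II forces $\int\!\!\int g$ to equal a boundary flux of $\phi$) that must itself be tracked uniformly in $g$. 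Until a quantitative solution-operator or Green's-function estimate for this degenerate adjoint problem is proved, the right-hand side of your duality identity is not known to be finite, let alone uniformly bounded, so the supremum over $g$ cannot be taken; the paper's device $\phi=\eta u$ exists precisely to sidestep any such solvability question. A minor further point: prescribing the trace of $\phi$ on the interface to match $u$ is an unnecessary complication --- taking $\phi=0$ there kills the interface term outright and makes uniformity in $g$ easier to argue.
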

\begin{proof}
Let $\phi(x,y) = \eta u$ with $\eta$ being a smooth function with suitable growth and $u$ being the solution. The existence and integrability of itself and its generalized derivatives (regularity in Sobolev spaces) have been established in \cite{MENOZZI2018756, Talay2002StochasticHS}. Thus, 
\begin{align*}
&x\phi_x + (y-x) \phi_y +\phi_{xx} \\= &x\eta_x u + x\eta u_x + (y-x) \eta_y u + (y-x) \eta u_y +\eta_{xx} u +2\eta_x u_x +\eta u_{xx}\\ =&[x\eta_x+ (y-x) \eta_y +\eta_{xx} ]u + 2\eta_x u_x +\eta[x u_x  + (y-x)  u_y +u_{xx}]\\ =&[x\eta_x+ (y-x) \eta_y +\eta_{xx}] u +2\eta_x u_x+\eta[x u_x  (y-x)  u_y- u_{xx}]+ 2\eta u_{xx}.
\end{align*}
Plug it into \eqref{eqn:ibp_first}. This follows the same approach that is conducted in \cite{bensoussan2013regularity}, we can have a cut-off and/or mollifier of $u$ instead of $u$ itself if necessary. 
Apparently, the term $\int x\eta_x+ (y-x) \eta_y u-\eta_{xx} u$ is finite and known. Of course, the above quantity  equal to $\int_{-\infty}^\infty y\phi(y,y) u(y,y) dy$, which is known. 

The term, 
\begin{align*}
\int_{-\infty}^\infty \int_{-\infty}^{y+\kappa}\eta_xu_x dx dy &= \int_0^\infty \int_0^y u_x d\eta dy\\ &= \int_{-\infty}^\infty u_x(y,y) \eta(y,y) - \int_{-\infty}^\infty \int_{-\infty}^{y+\kappa}\eta u_{xx} dx dy.
\end{align*}
This will cancel the term $\eta u_{xx}$, hence, we can conclude that there exists a $C$ such that, 
\begin{align*}
\int_{-\infty}^\infty \int_{-\infty}^{y+\kappa} [x\eta_x+ (y-x) \eta_y +\eta_{xx}] u dx dy \le C
\end{align*}
Now we can pick a proper $\eta$ to have the desired result. For example, $\eta$ is taken as $\frac{x^2}{2} {\bf 1}\Omega$, then we can conclude $\int_\Omega |x+1|^2 u \le C$, which is suffient for the desired result. 
\end{proof}

\begin{lem}
\label{lem:Estimate_unified_II}For any bounded set $\Omega$ in Domain II, there exist positive constants $C_2$ and $C_3$, such that, 
\begin{align*}
\int \int_\Omega \vert u_{xxx} (x,y) \vert   dx dy \le C_2, \quad \int \int_\Omega \vert  u_{yy} (x,y) \vert   dx dy \le C_3.
\end{align*}
\end{lem}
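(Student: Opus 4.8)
The plan is to reuse the \emph{a priori} integration-by-parts machinery of Lemma~\ref{lem:Estimate_unified}, but to apply it to the derivatives of $u$ rather than to $u$ itself. Write $\call := -x\mu\,\partial_x -(y-x)\nu\,\partial_y +\frac12\partial_{xx}$ for the principal operator appearing in \eqref{eqn:pde+original_II} (taking $\eta=\delta$ as in that display). The structural fact driving everything is that $\call$ is preserved under differentiation: each partial derivative of $u$ solves an equation $\call(D u)=D H + R$, where $R$ is a linear combination, with coefficients bounded on the bounded set $\Omega$, of derivatives of $u$ of order no higher than that of $Du$. Concretely, differentiating \eqref{eqn:pde+original_II} gives $\call(u_y)=H_y+\nu u_y$, $\call(u_x)=H_x+\mu u_x-\nu u_y$, and, after a further differentiation, $\call(u_{yy})=H_{yy}+2\nu u_{yy}$ and $\call(u_{xy})=H_{xy}+(\mu+\nu)u_{xy}-\nu u_{yy}$. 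These identities are to be read in the weak sense, so that the derivatives of $H$ are transferred onto smooth test functions by integration by parts and only the integrability of $H$ itself — guaranteed by the moment hypotheses carried through Theorem~\ref{thm:main} — is used; where a genuine classical derivative of $H$ is unavoidable, one first mollifies the metric $h$ and passes to the limit, as already flagged in the proof of Lemma~\ref{lem:Estimate_unified}.

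First I would bound the first derivatives. The equation $\call(u_y)=H_y+\nu u_y$ is self-contained, so applying the weak formulation \eqref{eqn:ibp_first} with test function $\phi=\zeta\,u_y$ for a smooth cutoff $\zeta$ supported near $\Omega$, and exploiting the same cancellation of the $\zeta\,u_{yxx}$ term against the term produced by integrating $\zeta_x u_{yx}$ by parts — the exact device used in Lemma~\ref{lem:Estimate_unified} — yields $\int\!\int_\Omega|u_y|\le C$. Feeding this into $\call(u_x)=H_x+\mu u_x-\nu u_y$ and repeating the argument gives $\int\!\int_\Omega|u_x|\le C$. The regularity needed to justify these manipulations (existence and local integrability of the generalized derivatives and the cutoff/mollifier steps) is supplied by the Sobolev theory for this class of degenerate Kolmogorov operators \cite{MENOZZI2018756, Talay2002StochasticHS, bensoussan2013regularity}, and the traces of $u$ and its derivatives on the interface $\{\frac{x}{\delta}+n=\frac{y}{\eta}+(n+\kappa n^\al)\}$ — which enter as the boundary terms of \eqref{eqn:ibp_first} — are furnished by the Domain~I estimate, Lemma~\ref{lem:gradient_bound_in_domain_I}.

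With the first derivatives in hand, the two target quantities follow by a short bootstrap. For $u_{yy}$ I would run the same estimate on $\call(u_{yy})=H_{yy}+2\nu u_{yy}$, giving $\int\!\int_\Omega|u_{yy}|\le C_3$; the bound $\int\!\int_\Omega|u_{xy}|\le C$ then follows from $\call(u_{xy})=H_{xy}+(\mu+\nu)u_{xy}-\nu u_{yy}$, whose forcing now involves only the already-controlled $u_{yy}$. Finally $u_{xxx}$ is obtained algebraically rather than by a new a priori estimate: since $\frac12 u_{xx}$ is the only second-order term in \eqref{eqn:pde+original_II}, I can write $u_{xx}=2H+2x\mu u_x+2(y-x)\nu u_y$ and differentiate once in $x$ to get $u_{xxx}=2H_x+2\mu u_x+2x\mu u_{xx}-2\nu u_y+2(y-x)\nu u_{xy}$. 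On the bounded set $\Omega$ the coefficients are bounded and every term on the right is already controlled in $L^1(\Omega)$ (with $u_{xx}$ controlled by the preceding display together with the bounds on $u_x,u_y$ and the integrability of $H$), so $\int\!\int_\Omega|u_{xxx}|\le C_2$.

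The main obstacle is the degeneracy of $\call$ in the $y$ direction: because $\call$ carries the diffusion $\frac12\partial_{xx}$ but no $\partial_{yy}$, the energy identity produces coercive control only in $x$, and the bound on $u_{yy}$ in particular cannot be read off from standard elliptic regularity. The whole scheme therefore hinges on reproducing, at the level of each differentiated equation, the precise cancellation of Lemma~\ref{lem:Estimate_unified} and on absorbing the ``wrong-sign'' zeroth-order perturbations $\nu u_y$, $2\nu u_{yy}$, $(\mu+\nu)u_{xy}$ created by the $x$-dependent drift; here the choice of cutoff weight must be made with care, and it is the hypoelliptic structure of the drift — the bracket of $\partial_x$ with the transport field regenerating the $\partial_y$ direction — that ultimately makes the $y$-derivative bounds possible.
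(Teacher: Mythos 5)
Your overall strategy is the same one the paper uses: reuse the integration-by-parts machinery of Lemma~\ref{lem:Estimate_unified} on derivatives of $u$ (the paper does this by taking $\phi=\eta u_{xxx}$ and $\phi=\eta u_{yy}$ in the weak formulation and labels the device a Bernstein technique), with the Domain~I solution of Lemma~\ref{lem:gradient_bound_in_domain_I} supplying the interface data and the cited Sobolev regularity for degenerate Kolmogorov operators justifying the manipulations. Your commutator identities $\call(u_y)=H_y+\nu u_y$, $\call(u_x)=H_x+\mu u_x-\nu u_y$, $\call(u_{yy})=H_{yy}+2\nu u_{yy}$, $\call(u_{xy})=H_{xy}+(\mu+\nu)u_{xy}-\nu u_{yy}$ are all correct, and differentiating the equation before testing is formally equivalent (up to interface terms) to the paper's testing of the underived equation against $\eta\,D^k u$. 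The algebraic extraction of $u_{xxx}$ from $u_{xx}=2H+2x\mu u_x+2(y-x)\nu u_y$ is a genuinely different, and attractive, way to avoid a third-order energy estimate.

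However, your deviation creates a concrete gap: it forces derivatives of $H$ into the final bounds, and these are not controlled under the paper's hypotheses, which assume only integrability of $H$ (see the proof of Lemma~\ref{lem:Estimate_unified}). The pointwise identity $u_{xxx}=2H_x+2\mu u_x+2x\mu u_{xx}-2\nu u_y+2(y-x)\nu u_{xy}$ requires $H_x\in L^1(\Omega)$, and your fallback of mollifying $h$ and passing to the limit does not repair this: $\Vert H^\epsilon_x\Vert_{L^1(\Omega)}$ has no uniform bound as the mollification parameter vanishes unless $h$ is locally of bounded variation, so the constant $C_2$ produced this way blows up. Likewise, your claim that derivatives of $H$ can be ``transferred onto smooth test functions'' fails for the tests you actually use: the test function is $\zeta u_{yy}$, so integrating $\int H_{yy}\,\zeta u_{yy}$ by parts lands derivatives back on $u_{yy}$ and generates the uncontrolled terms $u_{yyy}$ and $u_{yyyy}$. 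The paper's version avoids this issue structurally, since $H$ is never differentiated there; the forcing enters only as $\int H\,\eta\,D^k u$, which can be handled by Young absorption using $H\in L^2_{\rm loc}$. Finally, at the real crux --- obtaining control of $u_{yy}$ itself when the operator is coercive only in the $x$ direction and the zeroth-order term $2\nu u_{yy}$ competes against the drift divergence $\mu+\nu$ --- you correctly identify the obstacle but do not carry out the absorption; in fairness, the paper's own three-sentence proof defers this same step to ``Bernstein techniques'' and ``explicit calculation in the place of maximum principle,'' so on that point your sketch is no less complete than the paper's, but the $H$-derivative issue above is a gap introduced by your route alone.
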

\begin{proof}
The above arguments also applies to $\phi=\eta u_{xxx}$ and $\phi=\eta u_{yy}$. In fact, this type of estimation falls into the general category of the Bernstein techniques, see, e.g. \cite{OleKru61}. Here, we made use of the solution in domain I, and some explicit calculation in the place of maximum principle that is normally instrumental in applying Bernstein techniques.
\end{proof}
Lemmas \ref{lem:Estimate_unified} and \ref{lem:Estimate_unified_II}, in conjunction with one of the moment bounds, implies that, 
%\begin{cor}
%\label{cor:unify} $\ex[E_i]=O(1/\sqrt{n})$ for $i=1, 3, 4$ and $\ex[E_2]= %O(1/n)$. Meanwhile, we have the more sophisticated estimates,
%\begin{align*}
%\ex[E_i] &= O(n\delta^3),i=1,2, \quad  \ex[E_i] = O(n^\al \delta^2), i=3,4 
%\end{align*}
%\end{cor}
\begin{proof}[Proof of Lemma \ref{lem:error}]
To show that $\ex[E_1]= O(n\delta^3)$, we only need that $u_{xxx}$ is locally integrable, which is the result of Lemmas \ref{lem:gradient_bound_in_domain_I} and  \ref{lem:Estimate_unified_II}. The same lemmas, in conjunction with Lemma \ref{lem:b1_mean}, will guarantee that $\ex[E_2]=  O(n\delta^3)$. Then, Lemma \ref{lem:gradient_bound_in_domain_I} for domain I and Lemma \ref{lem:Estimate_unified_II} for domain II  indicate the boundedness of $u_{yy}$, together with Lemmas \ref{lem:b2_mean}, they imply, 
$\ex[E_i]= O(n^a\eta^2)$ for $i=3,4$.
\end{proof}

\bibliography{Lu}{}
\end{document}